\let\oldeq\equation{}\def\equation{\par\vspace{-\parskip}\oldeq}
\newcounter{saveeqn}%
\author{
Vandana Sharma\footnotemark[1]     }
\begin{document}
\title{Global Existence and Uniform Estimates For Solutions to Reaction Diffusion Systems with Mass Transport Type Boundary Conditions}
\maketitle

\renewcommand{\thefootnote}{\fnsymbol{footnote}}

 \footnotetext[2]{Department of Mathematics, Indian Institute of Technology Jodhpur, Rajasthan, India, 342037. Email: \textup{\nocorr \texttt{vandanas@iitj.ac.in}}.}

\begin{abstract}
We consider reaction diffusion systems where components diffuse inside the domain and react on the surface through mass transport type boundary conditions. Under reasonable hypotheses, we
establish the existence of component wise non-negative global solutions which are uniformly bounded in the sup norm. \end{abstract}

\begin{keywords}
 reaction-diffusion equations, mass transport, conservation of mass, global existence.
\end{keywords}

\begin{AMS}
35K57, 35B45
\end{AMS}

\section{Introduction} 

Suppose $m\ge 2$ is a natural number, $T>0$, and $\Omega$ is a bounded domain in $\mathbb{R}^n$ with smooth boundary M ($\partial \Omega$) belonging to the class $C^{2+\sigma}$ with $\sigma>0$, such that $\Omega$ lies locally on one side of its  boundary. $\eta$ is the unit outward normal to $M$ (from $\Omega$), and $\Delta$ is the Laplace operator. We are interested in the system 
\begin{align}\label{primary}
\frac{\partial u_i}{\partial t}&=d_i \Delta u_i+F_i(u)  \ & (x,t)\in \Omega\times(0,T)&\  \text{for} \  i=1, ..., m\nonumber\\
d_i\frac{\partial u_i}{\partial\eta}&=G_i(u)  \ & (x,t)\in M\times(0,T)&\  \text{for}\  i=1, ..., m\\
u_i&=w_i  \ & (x,t)\in \overline\Omega\times \left\{0\right\}&\ \text{for}\ i=1,...,m.  \nonumber
\end{align}
Here $d_i>0$ for all $i=1, ...,m$, $F=(F_i), G=(G_i) : \mathbb{R}^m \rightarrow \mathbb{R}^m$ are smooth, quasi positive and polynomially bounded, and the initial data $w=(w_i) \in C^2(\overline{\Omega})$ with $w_i\ge 0$ for all $i=1,...,m$, and
\[ d_i \frac{\partial w_i}{\partial \eta}=G_i(w)\quad \text{on}\quad M\quad \text{for all}\quad i=1,...,m.\]For those not familiar with quasi positivity, see assumption ($V_{\text{QP}}$) in the next section.

In 1987, Hollis, Martin and Pierre \cite{RefWorks:5} considered (\ref{primary}) in the case when $m=2$ and $G_1(u)=G_2(u)=0 $. The conditions on the vector field $F(u)$ above guarantee local well posedness of nonnegative solutions, and the authors asked whether solutions would exist globally if there exist constants $a>0$ and $K\in \mathbb{R}$ such that
\begin{align}\label{martinmass}
aF_1(u)+F_2(u)\le K(u_1+u_2+1)
\end{align}
for all $u_1,u_2\ge 0$. 
The assumption (\ref{martinmass}) easily implies bounds for $\|u_i(\cdot,t)\|_{1,\Omega}$ for $i=1,2$, and more importantly, in the absence of diffusion, this assumption implies  solutions exist globally, by adding $a$ times the differential equation for $u_1$ to the differential equation for $u_2$. In the case when $d_1,d_2>0$, Hollis et al proved (\ref{martinmass}) implies that the solutions to (\ref{primary}) are global if at least one of $\|u_1\|_{\infty}$ or $\|u_2\|_{\infty}$ is a priori bounded on $\Omega\times(0,T)$ for every $T>0$. The latter assumption is not easily removed, since Pierre and Schmitt  \cite{RefWorks:104} gave an example of a system that satisfies the assumptions above, and blows up in finite time. Although the particular example had Dirichlet boundary conditions, as opposed to the homogeneous Neumann boundary conditions being considered, it seemed clear that adjustments could be made to create a system for which (\ref{martinmass}) holds, and the solution blows up in finite time.

It's less obvious that (\ref{martinmass}) also implies bounds for $\|u_i\|_{2,\Omega\times(0,T)}$ for $i=1,2$ and $T>0$ cf. \cite{RefWorks:3}, and more recently, for $\|u_i\|_{2+\epsilon,\Omega\times(0,T)}$ for $i=1,2$, $T>0$ and $\epsilon>0$ sufficiently small (independent of $T$), \cite{RefWorks:101}. In the past 30 years, there has been an explosion of results for (\ref{primary}), in the setting of $m\ge 2$ and $G_i(u)=0$ for all $i$, with various assumptions mirroring (\ref{martinmass}). These assumptions impose additional structure on the vector field $F(u)$ to obtain results without assuming a priori sup norm bounds on some subset of the components of the solution. \cite{RefWorks:86} contains an excellent history of this problem and a great deal of the subsequent work.

One useful assumption for attacking (\ref{primary}) in the setting when $G_i(u)=0$ for all $i$, is the so-called linear intermediate sum condition, which assumes the existence of an $m\times m$ lower triangular matrix $A=(a_{i,j})$ with positive diagonal entries, and a constant $K\in\mathbb{R}$ so that 
\begin{align}\label{intermed}
AF(u)\le K\vec{1}\left(\sum_{i=1}^m u_i+1\right)
\end{align}
for all $u_i\ge 0$. This assumption was first introduced in \cite{RefWorks:3} to prove global existence, and variants have evolved since that time, including the right hand side of (\ref{intermed}) being squared when $n=2$, in \cite{RefWorks:101}. It has also been shown that when (\ref{intermed}) is not assumed, but only an $m$ component version of (\ref{martinmass}) is assumed, and the vector field $F(u)$ is componentwise quadratically bounded, then solutions exist globally, (cf. \cite{RefWorks:102}, \cite{RefWorks:103}).

Another result in the cased when $G_i(u)=0$ for all $i$, was given in \cite{RefWorks:8}, where the authors showed that global existence could be obtained under the assumption of the existence of a real number $K>0$ so that for every choice of $a=(a_1,...,a_{m-1})$, with $a_1,...,a_{m-1}\ge K$, there exists $L_a\ge 0$ so that 
\begin{align}\label{koua}
\sum_{i=1}^{m-1}a_iF_i(u)+F_m(u)\le L_a\left(\sum_{i=1}^m u_i+1\right)
\end{align}
for all $u_i\ge 0$. Interestingly, this condition makes it possible to create an infinite family of Lyapunov functions that can be used to obtain $L_p$ estimates for every $1<p<\infty$. In the case when $m=2$, it is a simple matter to prove that (\ref{intermed}) is contained in the assumption (\ref{koua}), but for $m>2$, this is not the case. For example, the vector field
\begin{align}\label{nk}
F(u)=\begin{pmatrix}
u_1-u_1u_2u_3 \\
u_1u_2u_3-u_2 \\
u_1u_2u_3-u_3 
\end{pmatrix}
\end{align}
is clearly quasi positive, polynomially bounded, and satisfies (\ref{intermed}) with 
\[ A=\begin{pmatrix} 1 &0&0\\1&1&0\\1&0&1\end{pmatrix}\] 
and $L=1$ . But it does not satisfy (\ref{koua}).

The case of the general system (\ref{primary}), with $G(u)\ne \vec{0}$ has not been extensively explored. The work in \cite{RefWorks:1} proves that a unique, componentwise nonnegative maximal solution to (\ref {primary}) exists on a maximum time interval $(0, T_{\max})$. In addition, if $T_{\max}<\infty$, then the sup norm of $u$ becomes unbounded as $t\rightarrow T_{\max}^{-}$. In this work, we explore two settings. First, we consider (\ref{primary}) in the setting of $m=2$, by asking whether the work in \cite{RefWorks:5} can be extended to the case where $G(u)\ne 0$. More precisely, we ask whether an extension of (\ref{martinmass}) can be used to include the vector field $G(u)$, to prove that  the solution to (\ref{primary}) is global if at least one of $\|u_1\|_{\infty}$ or $\|u_2\|_{\infty}$ is a priori bounded on $\Omega\times(0,T)$ for every $T>0$.  Then we conclude this work by considering (\ref{primary}) in the setting where the assumption (\ref{koua}) is extended to both $F$ and $G$. 

Before leaving this section, we give a handful of conditions on the initial data, and the vector fields $F(u)$ and $G(u)$. The first three of these will be used throughout this work, and various portions of the remaining will be used in our main results. We remark that throughout, $\mathbb{R}^m_{+}$ is the nonnegative orthant in $\mathbb{R}^m$.
\medskip
\begin{enumerate}
\item[($V_{\text{N}}$)]  $w=(w_i)\in C^2(\overline\Omega)$, $w$ is componentwise nonnegative on $\overline\Omega$, and $w$ satisfies the compatibility condition\\
\[ d_i \frac{\partial w_i}{\partial \eta}=G_i(w)\quad on \ M.\]
\item[($V_{\text{F}}$)] $F=(F_i),G=(G_i):\mathbb{R}^m\rightarrow \mathbb{R}^m$ are locally Lipschitz.
\item[($V_{\text{QP}}$)] $F$ and $G$ are {\em quasi positive}.  That is $F_i(u), G_i(u)\geq 0$ for all $u\in\mathbb{R}^m_{+}$ with $u_i=0$ for all $i=1, ..., m$.
\item[($V_{\text{L1}}$)] There exists $b_j>0$ and $L_1\ge 0$ such that \[ \sum_{j=1}^m b_j F_j(z), \sum_{j=1}^m b_j G_j(z)\leq L_1\left(\sum_{j=1}^m z_j +1\right)  \quad  \text{for all} \quad  z\in\mathbb{R}^m_{+}.\] 
\item[($V_{\text{L}}$)] There exists a constant $K>0$, so that if $a=(a_1,...,a_{m-1})$ with $a_1, ...,a_{m-1}\geq K$, and $a_m=1$, then there is a constant $L_a\ge 0$ so that 
\[
\sum_{j=1}^ma_j F_j(z), \sum_{j=1}^m a_jG_j(z)\leq L_a\left(\sum_{j=1}^m z_j+1\right) \quad  \text{for all} \quad  z\in\mathbb{R}^m_{+}. \]
\item[($V_{\text{Poly}}$)] $F$ and $G$ are {\em polynomially bounded}.  That is, there exists $M> 0$ and a natural number $l$ such that \[ \vert F_i(z)\vert, \vert G_i(z)\vert\leq M\left(\sum_{i=1}^m z_i+1\right)^l \ \text{for all}\  z\in \mathbb{R}^m_{+}.\]
\end{enumerate}
Note that $(V_{\text{L}})$ implies $(V_{\text{L1}})$, but the opposite is not true, and we have special need of the value of $L_1$ in $(V_{\text{L1}})$ that holds for this specific case. So we write $(V_{\text{L1}})$ and $V_{\text{L}}$ separately.

The statements of our main results are given in Section 2, and their proofs are given in the remaining sections. 

\section{Notation and Statements of Main Results}
Throughout this work $n\geq 1$. As stated in the introduction, $\Omega$ be a bounded doamin of $\mathbb{R}^n$ with smooth boundary $M$ such that $\Omega$ lies locally on one side of $M$. We define all $L_p$ and Sobolev function spaces on $\Omega$ and $\Omega_T=\Omega\times(0,T)$, and similar definitions can be given on $M$ and $M_T=M\times(0,T)$. Measurability and summability are to be understood everywhere in the sense of Lebesgue.

If $p\ge 1$, then $L_p(\Omega)$ is the Banach space consisting of all measurable functions on $\Omega$ that are $p^{th}$ power summable on $\Omega$. The norm is defined as\[ \Vert u\Vert_{p,\Omega}=\left(\int_{\Omega}| u(x)|^p dx\right)^{\frac{1}{p}}\]
Also, \[\Vert u\Vert_{\infty,\Omega}= \operatorname*{ess~sup}\lbrace |u(x)|:x\in\Omega\rbrace.\]

If $p\geq 1$, then $W^2_p(\Omega)$ is the Sobolev space of functions $u:\Omega\rightarrow \mathbb{R}$ with generalized derivatives, $\partial_x^s u$ (in the sense of distributions) $|s|\leq 2$ belonging to $L_p(\Omega)$.  Here $s=(s_1,s_2,...,s_n)$, $|s|=s_1+s_2+...+s_n$, $|s|\leq2$, and $\partial_x^{s}=\partial_1^{s_1}\partial_2^{s_2}...\partial_n^{s_n}$ where $\partial_i=\frac{\partial}{\partial x_i}$. The norm in this space is \[\Vert u\Vert_{p,\Omega}^{(2)}=\sum_{|s|=0}^{2}\Vert \partial_x^s u\Vert_{p,\Omega} \]

Similarly, $W^{(2,1)}_p(\Omega_T)$ is the Sobolev space of functions $u:\Omega_T\rightarrow \mathbb{R}$ with generalized derivatives, $\partial_x^s\partial_t^r u$ (in the sense of distributions) where $2r+|s|\leq 2$ and each derivative belonging to $L_p(\Omega_T)$. The norm in this space is \[\Vert u\Vert_{p,\Omega_T}^{(2,1)}=\sum_{2r+|s|=0}^{2}\Vert \partial_x^s\partial_t^r u\Vert_{p,\Omega_T}. \]

In addition to the spaces above, we also make reference to the well known spaces of continuous functions and continuously differentiable functions. For a rigorous treatment of these spaces, and the associated spaces on $M$ and $M_T$, we refer the reader to Chapter 2 of \cite{RefWorks:65}.\\

\begin{definition}\label{def}
 A function $u$ is said to be a solution of $(\ref{primary})$ if and only if \[ u\in C(\overline\Omega\times[0,T),\mathbb{R}^m)\cap C^{1,0}(\overline\Omega\times(0,T),\mathbb{R}^m)\cap C^{2,1}(\Omega\times(0,T),\mathbb{R}^m)\] such that $u$ satisfies $(\ref{primary})$. If $T=\infty$ then the solution is said to be a global solution.
\end{definition}\\

\noindent We start by stating a local well posedness result that was proved in $\cite{RefWorks:1}$.\\

\begin{theorem}\label{local}
Suppose $(V_N)$, $(V_{F})$, and $(V_{QP})$ holds. Then there exists $T_{\max}>0$ such that $\left (\ref{primary}\right)$ has a unique, maximal, component-wise nonegative solution $u$ with $T=T_{\max}$.  Moreover, if $T_{\max}<\infty$ then 
\[\displaystyle \limsup_{t \to T^-_{\max}}\Vert u(\cdot,t)\Vert_{\infty,\Omega}=\infty. \]
\end{theorem}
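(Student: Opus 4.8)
The plan is to establish local existence and uniqueness by a fixed point argument, and then obtain the blow-up alternative from the structure of the iteration scheme. This is essentially the standard approach for reaction–diffusion systems, adapted to the mass-transport boundary condition, so I would follow the lines of the analogous result in \cite{RefWorks:1}, which is cited as the source of the theorem.

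First I would set up the functional-analytic framework. Since each $d_i>0$ and the boundary condition is of Neumann/Robin type, the operator $-d_i\Delta$ with the linear boundary operator $d_i\partial_\eta$ generates an analytic semigroup on a suitable Banach space (e.g.\ $C(\overline\Omega)$ or an $L_p$-based space), and the inhomogeneous boundary data $G_i(u)$ can be absorbed by the standard variation-of-parameters formula for boundary-value problems (using the Dirichlet-to-Neumann or Robin map, cf.\ the parabolic theory in Chapter IV of \cite{RefWorks:65}). Concretely, one rewrites \eqref{primary} as a system of integral equations: $u_i(\cdot,t)=S_i(t)w_i+\int_0^t S_i(t-s)F_i(u(\cdot,s))\,ds + (\text{boundary term involving } G_i(u))$, where $S_i$ is the semigroup generated by $d_i\Delta$ with homogeneous Neumann data. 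I would then choose $T$ small and a closed ball in $C(\overline\Omega\times[0,T],\mathbb{R}^m)$ around the constant function $w$, and show the map defined by the right-hand side is a contraction there, using that $F$ and $G$ are locally Lipschitz $(V_F)$ and that $w$ satisfies the compatibility condition $(V_N)$ so that the candidate solution stays continuous up to $t=0$. This yields a unique solution on $[0,T]$ in the class of Definition~\ref{def}; parabolic regularity bootstraps it into $C^{1,0}$ up to the boundary and $C^{2,1}$ in the interior.

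Next, componentwise nonnegativity follows from quasi-positivity $(V_{QP})$ via the invariant-region / maximum principle argument: freeze the other components and view the $i$-th equation as a scalar parabolic equation whose reaction and boundary terms are $\ge 0$ whenever $u_i=0$ (by $(V_{QP})$), so $u_i\ge 0$ is preserved; one can make this rigorous by a truncation or by adding an $\eps$ and letting $\eps\to0$. Then one extends the local solution to a maximal interval $[0,T_{\max})$ by the usual continuation argument: as long as the solution remains bounded, the local existence time depends only on an a priori bound for $\|u(\cdot,t)\|_{\infty,\Omega}$ (and on fixed data), so the solution can be restarted and pieced together. Finally, for the blow-up alternative: if $T_{\max}<\infty$ but $\limsup_{t\to T_{\max}^-}\|u(\cdot,t)\|_{\infty,\Omega}<\infty$, then $u$ is bounded on $[0,T_{\max})$, hence by the uniform local existence time one could continue the solution past $T_{\max}$, contradicting maximality; so the $\limsup$ must be $+\infty$.

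The main obstacle is the correct handling of the inhomogeneous, nonlinear boundary term $G_i(u)$ in the fixed-point setup — ensuring that the boundary-potential operator maps into the right space with good enough Lipschitz/smoothing estimates (so that the contraction works and so that the solution is genuinely $C^{1,0}(\overline\Omega\times(0,T))$ up to the boundary), and that the compatibility condition in $(V_N)$ is exactly what is needed for continuity at $t=0$. The nonnegativity argument also needs care because the maximum principle must be applied with a nonlinear Robin-type boundary condition rather than homogeneous Neumann data. Since all of this is carried out in \cite{RefWorks:1}, I would cite that work for the technical details and only sketch the semigroup/contraction and continuation steps here.
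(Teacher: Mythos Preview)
Your proposal is correct and in fact goes further than the paper does: the paper gives no proof at all for this theorem, simply stating that it ``was proved in \cite{RefWorks:1}'' and citing that reference. Since you likewise defer the technical details to \cite{RefWorks:1} while sketching the standard semigroup/contraction and continuation argument, your approach is entirely consistent with the paper's treatment.
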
\\

\noindent According to Theorem \ref{local}, global existence is guaranteed provided we can obtain a priori sup norm bounds for each component of our solution. This leads us immediately to ask whether the results in \cite{RefWorks:5} can be extended to this setting. We give a partial response in the result below.\\

\begin{theorem}\label{martinthm}
Suppose $m=2$ and $(V_N)$, $(V_{F})$, $(V_{QP})$, $(V_{L1})$ and $(V_{Poly})$ hold, and let $T_{\max}>0$ be given in Theorem \ref{local}. If there exists a nondecreasing function $h\in C(\mathbb{R}_+,\mathbb{R}_+)$ such that $\|u_i(\cdot,t)\|_{\infty,\Omega}\le h(t)$ for all $0\le t<T_{\max}$, for either $i=1$ or $i=2$, and there exists $K>0$ so that whenever $a\ge K$ there exists $L_a\ge 0$ so that 
\begin{align}\label{nearlykoua}
aG_1(z)+G_2(z)\le L_a(z_1+z_2+1),\quad\text{for all}\quad z\in\mathbb{R}_+^2,
\end{align}
then  (\ref{primary}) has a unique component-wise nonegative global solution.  
\end{theorem}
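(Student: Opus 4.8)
The plan is to invoke Theorem~\ref{local}: it suffices to prove that if $T_{\max}<\infty$ then $u$ is bounded on $\overline\Omega\times[0,T_{\max})$, for then the blow-up alternative forces $T_{\max}=\infty$, and componentwise nonnegativity and uniqueness are inherited from that theorem. Relabelling the components so that it is $u_1$ that satisfies $\|u_1(\cdot,t)\|_{\infty,\Omega}\le H:=h(T_{\max})$ on $[0,T_{\max})$, everything reduces to an a priori sup-norm bound for $u_2$. Since $u_1\le H$, hypothesis $(V_{\text{Poly}})$ gives $|F_i(u)|,|G_i(u)|\le M(H+u_2+1)^l$ throughout, i.e.\ all nonlinearities are dominated by a fixed-degree polynomial in $u_2$; this fixed degree is what will make the bootstrap terminate.

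First I would establish that $\sup_{t<T_{\max}}\|u_2(\cdot,t)\|_{1,\Omega}<\infty$ together with the low-order space--time bounds $u_2\in L^2(\Omega_{T_{\max}})\cap L^2(M_{T_{\max}})$. Differentiating $t\mapsto\int_\Omega(b_1u_1+b_2u_2)\,dx$ and using $(V_{\text{L1}})$ on both $F$ and $G$ controls the bulk reaction, but leaves the boundary trace term $\int_M u_2$; I would absorb it either by running this computation alongside the dissipative functional $\int_\Omega(u_2+1)^{1+\delta}\,dx$ and a trace interpolation inequality $\|\psi\|_{2,M}^2\le\varepsilon\|\nabla\psi\|_{2,\Omega}^2+C_\varepsilon\|\psi\|_{2,\Omega}^2$ with $\psi=(u_2+1)^{(1+\delta)/2}$, or by a duality argument on the effective mass $U:=b_1u_1+b_2u_2$: writing $b_1d_1u_1+b_2d_2u_2=\mu U$ with $\min_i d_i\le\mu\le\max_i d_i$, the pair $U_t=\Delta(\mu U)+\Phi$, $\partial_\eta(\mu U)=\Psi$ with $\Phi,\Psi\le L_1 C(U+1)$ can be tested against the solution of a backward heat equation with homogeneous Neumann data and zero terminal value to yield $L^2$ bounds for $U$ on $\Omega_{T_{\max}}$ and on $M_{T_{\max}}$.

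The heart of the proof is an a priori bound $\sup_{t<T_{\max}}\|u_2(\cdot,t)\|_{p,\Omega}<\infty$ for every $p<\infty$, proved by induction on $p$ through a family of Lyapunov functionals
\[
\mathcal{L}_p(t)=\int_\Omega \Phi_p(u_1,u_2)\,dx,\qquad \Phi_p(u_1,u_2)=\sum_{k=0}^{p}c_k^{(p)}\,(u_1+1)^{k}(u_2+1)^{p-k},\quad c_0^{(p)}=1,\ c_k^{(p)}\ge 0,
\]
where the coefficients $c_k^{(p)}$ are chosen (in the spirit of the construction in \cite{RefWorks:8}) so that, after integration by parts in $\mathcal{L}_p'(t)$: (i) the second-order part is bounded above by $-c\int_\Omega(u_2+1)^{p-2}|\nabla u_2|^2$ for some $c>0$, up to a remainder quadratic in $\nabla u_1$; and (ii) the reaction contribution is organised into the two linearly controlled combinations $\int_M(u_2+1)^{p-1}(aG_1+G_2)$, with $a\ge K$ large and handled by (\ref{nearlykoua}), and $\int_\Omega(u_2+1)^{p-1}(b_1F_1+b_2F_2)$, handled by $(V_{\text{L1}})$; using $u_1\le H$ these are $\le C_p\int_M(u_2+1)^p$ and $\le C_p\int_\Omega(u_2+1)^p$ respectively. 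It is precisely the need to realise such $\Phi_p$ for every $p$ — which, for large $p$, requires (\ref{nearlykoua}) with a correspondingly large $a$ — that explains why the hypothesis is imposed for all $a\ge K$ rather than for a single value. The boundary integral is then absorbed into the diffusive dissipation via the trace inequality applied to $(u_2+1)^{p/2}$; the interior integral $\int_\Omega(u_2+1)^p$ and the $\nabla u_1$-remainder are dealt with by Young's inequality, by Gagliardo--Nirenberg interpolation against the $L^1$ bound from the previous step (linear growth is subcritical for every $p$), and by an a priori bound on $\nabla u_1$ coming from a parabolic bootstrap for $u_1$ run in parallel: since $\|u_1\|_\infty\le H$ and $|F_1|,|G_1|\le M(H+u_2+1)^l$, the $L^q$-information on $u_2$ available at a given stage yields $u_1\in W^{2,1}_{q/l}(\Omega_{T_{\max}})$ and hence $\nabla u_1$ in successively better spaces, eventually $L^\infty(\Omega_{T_{\max}})$. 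Each stage closes as $\mathcal{L}_p'(t)\le -c_p\mathcal{L}_p(t)+C_p$, so that Gronwall's inequality gives the $L^p$ bound together with $\int_0^{T_{\max}}\|\nabla(u_2+1)^{p/2}\|_{2,\Omega}^2\,dt<\infty$ (which feeds the next stage and, through the trace theorem, supplies the $L^p(M_{T_{\max}})$-bounds used for $u_1$).

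Finally, once $u_2\in L^\infty_tL^p_x$ for arbitrarily large $p$ — hence $u_2\in L^p(\Omega_{T_{\max}})\cap L^p(M_{T_{\max}})$ — the fixed-degree polynomial bounds $|F_2|,|G_2|\le M(H+u_2+1)^l$ place the inhomogeneity and the Neumann data of the $u_2$-equation in $L^q$ spaces with $q$ as large as desired, and parabolic $L^q$-theory gives $u_2\in W^{2,1}_q(\Omega_{T_{\max}})\hookrightarrow L^\infty(\Omega_{T_{\max}})$, contradicting the blow-up alternative of Theorem~\ref{local} unless $T_{\max}=\infty$. The genuinely difficult point is the third step. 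In contrast to the pure no-flux case, differentiating $\int_\Omega(u_2+1)^p$ produces a boundary reaction term with no pointwise control; the $u_1$-dependent part of $\Phi_p$ converts it into the Martin-type combination $aG_1+G_2$, but in doing so it injects the interior flux of $u_1$, the term $F_1$, and quadratic gradient cross-terms, so the estimate closes only if $\nabla u_1$ is itself controlled. This forces the inductions on $u_1$ and on $u_2$ to be carried out simultaneously, with careful tracking at each stage of which space--time integrability of $u_2$ is available (in particular the base case of the induction must be handled using only the regularity obtained in the second step), and verifying that the various trace and Gagliardo--Nirenberg absorptions close with the right constants is where the real work lies.
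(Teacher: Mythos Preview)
Your Lyapunov--functional step contains a concrete error that the paper's argument is specifically designed to avoid. When you differentiate $\mathcal{L}_p$, the interior reaction term and the boundary reaction term carry the \emph{same} weights: both appear as $\sum_\beta c_\beta\, u^{(\beta,p-1-\beta)}\bigl(a_\beta F_1+F_2\bigr)$ and $\sum_\beta c_\beta\, u^{(\beta,p-1-\beta)}\bigl(a_\beta G_1+G_2\bigr)$ with the $a_\beta$ determined by the functional (and necessarily large, since you need $a_\beta\ge K$ for (\ref{nearlykoua}) to apply). You cannot simultaneously arrange the $F$--combination to be the fixed $(V_{\text{L1}})$ combination $b_1F_1+b_2F_2$; the hypotheses give no linear control on $a F_1+F_2$ for large $a$, so your claim that the interior reaction is $\le C_p\int_\Omega(u_2+1)^p$ is unjustified. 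Falling back on $(V_{\text{Poly}})$ there gives only $\int_\Omega(u_2+1)^{p-1+l}$, which is too strong to absorb by interpolation against $L^1$ and the dissipation of $(u_2+1)^{p/2}$ when $l>1$.

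The paper sidesteps this by a two-stage argument. First, a \emph{duality} step in the spirit of Hollis--Martin--Pierre: test $u_1+u_2$ against the solution $\varphi$ of a backward linear problem with Robin data, use only the single $(V_{\text{L1}})$ combination on both $F$ and $G$, and exploit $\|u_1\|_\infty\le h(T)$ to control the sole cross term $(d_1-d_2)\int u_1\,\Delta\varphi$. Because $\varphi$ solves a linear equation, Lemma~\ref{neu} gives $\|\varphi\|^{(2,1)}_{p',\Omega_T}\le C$ for \emph{every} $p'>1$, so this step already yields $u_2\in L_p(\Omega_T)$ for all $p$ --- not merely $L^2$. Only then is the Kouachi-type functional $L(t)=\int_\Omega\sum_{\beta}\binom{p}{\beta}\theta^{\beta^2}u_1^\beta u_2^{p-\beta}$ brought in; at this point the interior term $I$ involving $\theta^{2\beta+1}F_1+F_2$ is simply a bounded number by $(V_{\text{Poly}})$ and H\"older, so no structural assumption on $F$ is needed there, and (\ref{nearlykoua}) is used only on the boundary term. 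Your worry about $\nabla u_1$ cross-terms and a parallel regularity bootstrap for $u_1$ is also unnecessary: the choice $\theta>(d_1+d_2)/(2\sqrt{d_1d_2})$ makes the diffusion Hessian $(b_{i,j})$ positive definite, so the full quadratic form in $(\nabla u_1,\nabla u_2)$ sits on the good side and the cross-terms are absorbed automatically.
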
\\

\noindent A corollary of the proof of Theorem \ref{martinthm} is that if the assumption (\ref{nearlykoua}) is omitted, then finite time blow up can only occur near the boundary. \\

\begin{corollary}\label{cor1}
Suppose $m=2$ and $(V_N)$, $(V_{F})$, $(V_{QP})$, $(V_{L1})$ and $(V_{Poly})$ hold, and let $T_{\max}>0$ be given in Theorem \ref{local}. If there exists a nondecreasing function $h\in C(\mathbb{R}_+,\mathbb{R}_+)$ such that $\|u_i(\cdot,t)\|_{\infty,\Omega}\le h(t)$ for all $0\le t<T_{\max}$, for either $i=1$ or $i=2$, then for every open subset $W\subset\Omega$ such that $\overline{W}\subset\Omega$, there exists there exists a nondecreasing function $h_W\in C(\mathbb{R}_+,\mathbb{R}_+)$ such that $\|u_i(\cdot,t)\|_{\infty,W}\le h_W(t)$ for all $0\le t<T_{\max}$, for both $i=1$ and $i=2$.
\end{corollary}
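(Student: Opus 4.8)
The plan is to re-run the argument that proves Theorem~\ref{martinthm}, localized to the interior, and to observe that the extra hypothesis (\ref{nearlykoua}) enters that argument only to estimate boundary integrals over $M$ --- the trace of the (a priori unbounded) component that shows up in the mass-type and duality estimates --- and that every such term carries a weight annihilated by a cutoff supported in $\Omega$. So, away from $M$, only $(V_N)$, $(V_{F})$, $(V_{QP})$, $(V_{L1})$, $(V_{Poly})$ and the a priori sup bound on one component are needed. I may assume throughout that $\|u_1(\cdot,t)\|_{\infty,\Omega}\le h(t)$ for $0\le t<T_{\max}$: the case $i=2$ is identical after relabeling, since, once (\ref{nearlykoua}) is dropped, all remaining hypotheses are symmetric in the two components (swap $b_1\leftrightarrow b_2$ in $(V_{L1})$). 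Then $u_1$ is already controlled on every $W$, so it suffices to bound $u_2$ on $W$.

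The heart is an interior space--time integrability bound of every order: for each $V$ with $\overline V\subset\Omega$, each $p<\infty$, and each $T<T_{\max}$, $\|u_2\|_{L_p(V\times(0,T))}\le C(T,V,p)$ with $C$ continuous in $T$ and independent of $T_{\max}$. First I would treat $p=1$ by testing the equation for $b_1u_1+b_2u_2$ against $\zeta^2$ with $\zeta\in C_c^\infty(\Omega)$: the boundary flux $b_1G_1+b_2G_2$ drops, $(V_{L1})$ dominates $b_1F_1+b_2F_2$ linearly, the $F_1$-contribution is rewritten via the $u_1$-equation and controlled using $u_1\le h$, and the commutator $\int u_2\,\Delta(\zeta^2\cdot\text{coeff})$ forces a slightly larger support --- so one descends through a fixed finite chain $V=W^{(0)}\subset\subset W^{(1)}\subset\subset\cdots\subset\subset W^{(N)}\subset\subset\Omega$, each step costing only a harmless amount of room. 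The bootstrap in $p$ is the usual localized energy estimate: testing the $u_2$-equation against $\zeta^2u_2^{p-1}$ yields a good term $\int|\nabla(\zeta u_2^{p/2})|^2$, while the reaction terms (of size $\sim u_2^{p-1+l}$ after using $(V_{L1})$, $u_1\le h$, the lower bound on $F_1$ from $(V_{Poly})$, and one more integration by parts in the $u_1$-equation) are reabsorbed via the parabolic Gagliardo--Nirenberg/Ladyzhenskaya inequality; for $p$ large this produces a genuine gain in integrability at the price of one link in the subdomain chain, so finitely many steps reach any prescribed $p$. A parallel interior $W^{(2,1)}_q$ bootstrap for $u_1$ runs alongside, since $F_1(u),F_2(u)$ become controlled in $L_q$ locally as the integrability of $u_2$ improves.

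With $u_2\in L_p(V\times(0,T))$ for every $p$, the Corollary follows from interior parabolic regularity: by $(V_{Poly})$, $F_2(u)\in L_p(V\times(0,T))$; since the $u_2$-equation holds classically in $\Omega\times(0,T)$ and $\overline W\subset V$, interior $L_p$ parabolic estimates ($W^{(2,1)}_p$ bounds on $W$ in terms of data on $V$) and Sobolev embedding with $p>\frac{n+2}{2}$ give $u_2\in C(\overline W\times[0,T])$ with a bound that is continuous in $T$ and finite for every $T$; taking $h_W$ to dominate this bound (and the given bound on the already-controlled component) completes the proof. The hard part will be the coupled, localized bootstrap of the previous paragraph: one must check that the integrability lost each time the superlinear term $u_2^{p-1+l}$ is absorbed, together with the simultaneous shrinking of the subdomain, stays under control over the finitely many steps required --- this is exactly where $(V_{Poly})$ (a concrete polynomial rate, not merely $(V_{L1})$) and careful tracking of the dimension $n$ are needed. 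A secondary subtlety is making the base $L_1$ estimate genuinely a priori, i.e.\ non-degenerate as $T\uparrow T_{\max}$, even though the total mass $\int_\Omega u_2$ need not stay bounded once (\ref{nearlykoua}) --- the only mechanism limiting inflow through $M$ --- has been removed.
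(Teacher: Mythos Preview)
Your proposal is substantially more complicated than the paper's argument, and it misses the structural observation that makes the corollary nearly immediate.

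The proof of Theorem~\ref{martinthm} has two distinct phases. The \emph{first} is the duality argument (equations (\ref{phieq})--(\ref{duality1.2})): testing $u_1+u_2$ against a nonnegative solution $\varphi$ of the backward problem (\ref{phieq}) and using only $(V_{L1})$ for both $F$ and $G$, together with the sup bound on $u_1$, yields $\|u_2\|_{p,\Omega_T}<\infty$ for every $1<p<\infty$ --- on the \emph{full} space--time cylinder $\Omega_T$, with no localization and no appeal to (\ref{nearlykoua}). By $(V_{Poly})$ this immediately gives $\|F_i(u)\|_{q,\Omega_T}<\infty$ for all $q<\infty$ and $i=1,2$. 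Hypothesis (\ref{nearlykoua}) enters only in the \emph{second} phase, to obtain $L_p$ control of the trace $u_2|_{M_T}$, which is what is needed to estimate $G_i(u)$ and close the sup-norm argument up to $\overline\Omega$.

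For the corollary the second phase is simply discarded. Given $\overline W\subset\Omega$, choose an intermediate $\tilde W$ with $\overline W\subset\tilde W\subset\subset\Omega$; interior parabolic $L_q$ estimates (from the proof of Theorem~9.1 in \cite{RefWorks:65}) give
\[
\|u_i\|^{(2,1)}_{q,\tilde W\times(0,t)}\le C\bigl(\|F_i(u)\|_{q,\Omega_t}+\|w_i\|_{C^2(\overline\Omega)}\bigr),
\]
and taking $q$ large enough plus Sobolev embedding finishes the proof in two lines.

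Your plan overlooks that the duality step is already global, and instead proposes to manufacture interior $L_p$ control from scratch via a localized energy bootstrap (testing against $\zeta^2 u_2^{p-1}$, shrinking subdomains, Gagliardo--Nirenberg absorption). Even if this can be made to work, the absorption step is genuinely delicate: rewriting $F_2$ through $(V_{L1})$ introduces $u_{1,t}$ via $F_1=(u_1)_t-d_1\Delta u_1$, and integrating $\int\zeta^2 u_2^{p-1}(u_1)_t$ by parts in time produces a term $\int u_1\,\partial_t(u_2^{p-1})$ that your sketch does not address. Your own caveats about ``the coupled, localized bootstrap'' point exactly here. The paper sidesteps all of this because the adjoint test function $\varphi\in W^{(2,1)}_{p'}$ carries enough regularity to make the analogous integrations by parts costless --- so the $L_p(\Omega_T)$ bound you are working so hard to localize is in fact already available globally.
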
\\

Note that (\ref{nearlykoua}) is a {\em portion} of $(V_{L})$ in the case $m=2$. It turns out that the full extend of $(V_L)$ is a useful tool for obtaining a priori estimates and proving global existence when $m\ge 2$.\\

\begin{theorem}\label{global}
Suppose $(V_N)$, $(V_{F})$, $(V_{QP})$, $(V_{L})$ and $(V_{Poly})$ hold. Then  $(\ref{primary})$ has a unique component-wise nonegative global solution.
\end{theorem}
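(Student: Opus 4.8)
The plan is to reduce the theorem, via Theorem~\ref{local}, to an a priori sup-norm bound that is finite on every bounded time interval, and then to obtain that bound by bootstrapping from $L^1$ up to $L^\infty$, with $(V_{\mathrm L})$ supplying the intermediate $L^p$ estimates at every exponent. By Theorem~\ref{local}, under $(V_{\mathrm N})$, $(V_{\mathrm F})$, $(V_{\mathrm{QP}})$ there is a unique, maximal, componentwise nonnegative solution $u$ on $[0,T_{\max})$, and $T_{\max}=\infty$ unless $\limsup_{t\to T_{\max}^-}\|u(\cdot,t)\|_{\infty,\Omega}=\infty$; in particular uniqueness and nonnegativity are already settled. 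So it suffices to show that for every $T\in(0,T_{\max})$ there is a finite constant $C(T)$, depending only on $T$, $n$, $\Omega$, the diffusivities, the initial data, and the constants in $(V_{\mathrm{L1}})$, $(V_{\mathrm L})$, $(V_{\mathrm{Poly}})$, with $\sup_{0\le t<T}\|u(\cdot,t)\|_{\infty,\Omega}\le C(T)$. If $T_{\max}<\infty$, applying this with $T$ arbitrarily close to $T_{\max}$ contradicts the blow-up alternative, so $T_{\max}=\infty$ and the solution is global. Fix $T\in(0,T_{\max})$; all constants below may depend on the quantities just listed.

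\emph{Step 1: mass control and $L^2$ bounds.} Integrating the $i$-th bulk equation over $\Omega$ and using the flux boundary condition gives $\frac{d}{dt}\int_\Omega u_i\,dx=\int_M G_i(u)\,d\sigma+\int_\Omega F_i(u)\,dx$. Forming the $b_i$-combination from $(V_{\mathrm{L1}})$ and using quasipositivity to discard the (favourably signed) contributions of the negative parts of the $F_i$ and $G_i$, $(V_{\mathrm{L1}})$ yields the one-sided controls $\sum_j b_jF_j(u)\le L_1(\sum_j u_j+1)$ in $\Omega_T$ and $\sum_j b_jG_j(u)\le L_1(\sum_j u_j+1)$ on $M_T$. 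With $u\ge 0$, these are exactly the input for Pierre's duality method: writing $U=\sum_j b_ju_j$ and $V=\sum_j b_jd_ju_j$, one has $\partial_tU-\Delta V=\sum_j b_jF_j(u)$ in $\Omega_T$ with $\partial_\eta V=\sum_j b_jG_j(u)$ on $M_T$; testing against the solution of a suitable backward parabolic problem and using nonnegativity of the dual solution together with parabolic smoothing gives $\sup_{0\le t<T}\|u_i(\cdot,t)\|_{1,\Omega}\le C$ and $\|u_i\|_{2,\Omega_T}+\|u_i\|_{2,M_T}\le C$ for each $i$. The one genuinely new point relative to the homogeneous-Neumann case is carrying the boundary flux through the duality, and this is where the $(V_{\mathrm{L1}})$ bound on $\sum_j b_jG_j$ is used.

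\emph{Step 2: $L^p$ bounds for every finite $p$.} This is the heart of the matter and the place where the full strength of $(V_{\mathrm L})$ — a linear bound on the intermediate sums $\sum_j a_jF_j$ and $\sum_j a_jG_j$ for \emph{every} choice of $a_1,\dots,a_{m-1}\ge K$ with $a_m=1$ — is needed. In the spirit of the infinite family of Lyapunov functions of \cite{RefWorks:8}, I would assemble, for each integer $p\ge 2$, a Lyapunov-type functional $\mathcal H_p(t)$ from the powers $u_i^p$ with weights $a_i^{(p)}\ge K$ drawn from $(V_{\mathrm L})$, and prove $\sup_{0\le t<T}\mathcal H_p(t)\le C_p$ by induction on $p$, Step~1 furnishing the estimates that start the induction. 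Differentiating $\mathcal H_p$ along the solution, the diffusion contributes a nonpositive dissipation comparable to $-\sum_i\int_\Omega|\nabla u_i^{p/2}|^2\,dx$ (no indefinite cross terms arise, since $\mathcal H_p$ is built from individual powers), the bulk reaction contributes $p\int_\Omega\sum_i a_i^{(p)}u_i^{p-1}F_i(u)\,dx$, and the surface reaction contributes $p\int_M\sum_i a_i^{(p)}u_i^{p-1}G_i(u)\,d\sigma$. Using $(V_{\mathrm L})$ for $F$, the obstructive high-order part of the bulk reaction cancels, leaving a part linearly dominated by $\mathcal H_p+1$ plus strictly lower-order terms that are absorbed by the dissipation via a Gagliardo--Nirenberg interpolation, together with the inductive $L^{p-1}$ bound and $(V_{\mathrm{Poly}})$; the surface reaction is handled the same way using $(V_{\mathrm L})$ for $G$, with trace inequalities converting $L^p(M_T)$-norms into $L^p(\Omega_T)$-norms modulo the dissipation. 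One is then left with $\mathcal H_p'(t)\le C_p(\mathcal H_p(t)+1)$, hence $\sup_{0\le t<T}\|u_i(\cdot,t)\|_{p,\Omega}\le C_p$ and, via the trace, $\|u_i\|_{p,M_T}\le C_p$; iterating over $p$ yields uniform bounds in $L^p(\Omega_T)$ and $L^p(M_T)$ for all $p<\infty$. I expect the precise bookkeeping of the reaction terms — making rigorous how the ``all $a\ge K$'' freedom in $(V_{\mathrm L})$ removes exactly the portion of $\sum_i a_i^{(p)}u_i^{p-1}F_i(u)$, and of the analogous surface sum, that the dissipation cannot absorb — to be the main technical obstacle, and the reason a single intermediate-sum inequality would not suffice.

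\emph{Step 3: from $L^p$ to $L^\infty$.} By $(V_{\mathrm{Poly}})$, $F_i(u)$ is then bounded in $L^p(\Omega_T)$ and $G_i(u)$ in $L^p(M_T)$ for every $p<\infty$, uniformly for $T$ bounded. Viewing each $u_i$ as the solution of the linear problem $\partial_tu_i-d_i\Delta u_i=F_i(u)$ in $\Omega_T$, $d_i\partial_\eta u_i=G_i(u)$ on $M_T$, $u_i(\cdot,0)=w_i$, the $L^p$ maximal-regularity theory for linear parabolic problems with such boundary conditions (\cite{RefWorks:65}) gives $u_i\in W^{(2,1)}_p(\Omega_T)$ with norm controlled by the preceding bounds; taking $p$ large and invoking the embedding $W^{(2,1)}_p(\Omega_T)\hookrightarrow C(\overline{\Omega}\times[0,T])$ produces the a priori bound $\sup_{0\le t<T}\|u(\cdot,t)\|_{\infty,\Omega}\le C(T)$. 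By Theorem~\ref{local} this is incompatible with $T_{\max}<\infty$, so (\ref{primary}) has a unique, componentwise nonnegative, global solution.
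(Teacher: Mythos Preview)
Your outline (local existence, then $L^p$ for all $p$, then $L^\infty$ via parabolic regularity) matches the paper, and Step~3 is essentially the paper's argument. The gap is in Step~2. With $\mathcal H_p=\sum_i a_i^{(p)}\int_\Omega u_i^p\,dx$ built from individual powers, the bulk reaction term after differentiating is $p\int_\Omega\sum_i a_i^{(p)}u_i^{p-1}F_i(u)\,dx$, in which the coefficient multiplying $F_i$ is the \emph{function} $a_i^{(p)}u_i^{p-1}$, not a constant. Assumption $(V_{\mathrm L})$ only controls $\sum_i a_iF_i(z)$ for \emph{constant} coefficients $a_i\ge K$ (with $a_m=1$), and since the individual $F_i$ can have either sign away from the coordinate hyperplanes, you cannot dominate each $u_i^{p-1}$ by a common factor such as $(\sum_j u_j)^{p-1}$ and then invoke $(V_{\mathrm L})$. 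The ``obstructive high-order part'' therefore does \emph{not} cancel for your diagonal functional, and neither the dissipation nor an inductive $L^{p-1}$ bound can absorb it; the same problem occurs for the boundary term with $G$.

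The paper's cure, following \cite{RefWorks:8}, is to take $L(t)=\int_\Omega H_{p_m}(u)\,dx$, where $H_{p_m}$ is the homogeneous polynomial of degree $p_m$ containing \emph{all} mixed monomials $u_1^{p_1}u_2^{p_2-p_1}\cdots u_m^{p_m-p_{m-1}}$ with weights $\theta_1^{p_1^2}\cdots\theta_{m-1}^{p_{m-1}^2}$ (Lemma~\ref{diff}). The point of the cross terms is that, monomial by monomial, $\partial_{u_i}H_{p_m}$ differs from $\partial_{u_m}H_{p_m}$ only by a constant factor $\prod_{k\ge i}\theta_k^{2p_k+1}$; hence $\sum_i\partial_{u_i}H_{p_m}\,F_i$ is a sum of nonnegative monomials times expressions $\sum_i c_iF_i$ with $c_i/c_m\ge K$ once the $\theta_k$ are large, and $(V_{\mathrm L})$ applies term by term to both $F$ and $G$. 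The price is that the diffusion now produces the full quadratic form with matrix $\bigl(\tfrac{d_i+d_j}{2}\partial_{u_iu_j}H_{p_m}\bigr)$, which has off-diagonal entries; the $\theta_i$ must simultaneously be chosen so that this matrix is positive definite, which is the role of the determinant conditions $K_l^l>0$ at the start of Section~5. With this functional, Lemma~\ref{lpestimate} yields the $L_p(\Omega_T)$ and $L_p(M_T)$ bounds directly from the $L_1$ estimate (the paper does not need an $L^2$ duality step), after which your Step~3 goes through.
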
\\

\noindent This global existence can also give rise to a uniform bound, provided an $L_1(\Omega)$ bound can be obtained for every component of the solution.\\

\begin{theorem}\label{uniform}
Suppose $(V_N)$, $(V_{F})$, $(V_{QP})$, $(V_{L})$ and $(V_{Poly})$ hold and $\Vert u\Vert_{1, \Omega\times (\tau, \tau+1)} $  is bounded independent of $\tau>0$. Then $(\ref{primary})$ has a unique, componentwise nonnegative global solution that is uniformly bounded in the sup norm. 
\end{theorem}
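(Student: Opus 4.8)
The plan is to bootstrap from the global existence already given by Theorem \ref{global} to a time-independent sup norm bound, using a self-improving iteration in $L_p$ on unit time windows together with parabolic regularity theory applied on shifted time intervals. Since Theorem \ref{global} already provides a unique componentwise nonnegative global solution $u$, it remains only to show $\sup_{t\ge 0}\|u(\cdot,t)\|_{\infty,\Omega}<\infty$. First I would exploit the hypothesis: $\|u\|_{1,\Omega\times(\tau,\tau+1)}$ is bounded by some $C_0$ independent of $\tau\ge 0$. By interpolation and the regularizing effect of the heat semigroup with the inhomogeneous boundary data $G_i(u)$, combined with $(V_{\text{Poly}})$, the first task is to upgrade this to a uniform $L_p(\Omega\times(\tau,\tau+1))$ bound for all finite $p$; here the key device is $(V_{\text{L}})$, which — exactly as in the proof of Theorem \ref{global} — furnishes an infinite family of functionals $\sum_j a_j u_j$ (with $a_j\ge K$) whose evolution is controlled both in the interior (via $F$) and on the boundary (via $G$), yielding a differential inequality of the form $\frac{d}{dt}\int_\Omega \phi(u) \le L\big(\int_\Omega(\textstyle\sum u_j+1)\big) + (\text{good boundary terms})$ that, together with the uniform $L_1$ control, closes to give uniform-in-$\tau$ $L_p$ bounds on unit windows.

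The second stage is a duality/regularity step: with $f_i := F_i(u)\in L_p(\Omega\times(\tau,\tau+1))$ and boundary source $g_i := G_i(u)\in L_p(M\times(\tau,\tau+1))$ bounded uniformly in $\tau$ (using $(V_{\text{Poly}})$ once the $L_p$ bounds on $u$ are in hand for large enough $p$), I would apply $L_p$ maximal regularity for the linear parabolic problem $\partial_t v - d_i\Delta v = f_i$, $d_i\partial_\eta v = g_i$ on the window $(\tau,\tau+1)$, with initial data taken at time $\tau$ — but to kill the dependence on $\|u(\cdot,\tau)\|$ I would instead work on the longer window $(\tau-1,\tau+1)$ (valid for $\tau\ge 1$), multiply by a smooth temporal cutoff $\chi$ vanishing near $t=\tau-1$ and equal to $1$ on $[\tau,\tau+1]$, and apply maximal regularity to $\chi v$, so the estimate depends only on $\|f_i\|_{p}$, $\|g_i\|_{p}$ and $\|u\|_{p}$ on the window, all uniform in $\tau$. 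This gives a uniform-in-$\tau$ bound on $\|u\|_{W^{(2,1)}_p(\Omega\times(\tau,\tau+1))}$.

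The final stage is Sobolev embedding: for $p$ chosen large enough (depending on $n$) relative to the polynomial growth exponent $l$ in $(V_{\text{Poly}})$, $W^{(2,1)}_p(\Omega\times(\tau,\tau+1))$ embeds into $C^{\alpha,\alpha/2}(\overline\Omega\times[\tau+\tfrac12,\tau+1])$, giving $\|u(\cdot,t)\|_{\infty,\Omega}\le C_1$ for all $t\ge \tfrac32$ with $C_1$ independent of $\tau$; the bound on $[0,\tfrac32]$ is finite by continuity of the (global) solution. Hence $u$ is uniformly bounded in the sup norm. I expect the main obstacle to be the first stage — pushing the uniform $L_1$ bound up to uniform $L_p$ for all $p$ while simultaneously controlling the boundary nonlinearity $G$: one must iterate carefully, at each step feeding the current $L_p$ bound on $u$ through $(V_{\text{Poly}})$ to control $f_i,g_i$ in $L_{p'}$ for a slightly larger $p'$ dictated by the trace and parabolic embedding exponents, and verify that this gain exponent exceeds $1$ so finitely many steps reach any prescribed $p$; the interplay of the interior and boundary estimates in $(V_{\text{L}})$, and the need to keep every constant independent of $\tau$, is the delicate bookkeeping.
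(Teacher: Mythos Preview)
Your overall architecture matches the paper's: first upgrade the uniform $L_1$ control to uniform-in-$\tau$ $L_p$ control on unit windows, then use a temporal cutoff to kill dependence on the initial data and apply parabolic regularity (the paper uses Lemma~\ref{holder} directly rather than $W^{(2,1)}_p$ embedding, but these are interchangeable here). The one place you diverge is in what you flag as the ``main obstacle.'' You correctly say at the outset that the first stage is done ``exactly as in the proof of Theorem~\ref{global},'' but then backtrack into describing a parabolic bootstrap iteration (feed $L_p$ through $(V_{\text{Poly}})$ to control $F_i,G_i$ in $L_{p'}$, gain a little, repeat). That iteration is unnecessary and, worse, would be delicate to close because of the nonlinear boundary flux. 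The paper avoids it entirely: the Lyapunov functional $L(t)=\int_\Omega H_{p_m}(u)\,dx$ from Lemma~\ref{diff}, combined with $(V_{\text L})$ applied to \emph{both} $F$ and $G$, and Lemma~\ref{omega} to absorb the boundary term $\int_M\sum_j u_j^{p_m}$ into the gradient term $\int_\Omega\sum_j|\nabla u_j^{p_m/2}|^2$, yields a differential inequality $L'(t)\le M_3-\alpha_{p_m}L(t)$ with $M_3$ depending only on $\|u(\cdot,t)\|_{1,\Omega}$. This gives uniform $L_{p_m}(\Omega)$ bounds for \emph{every} $p_m$ in one shot, no iteration; a second pass through the same inequality (this time keeping the boundary term on the left) gives the uniform $L_{p_m}(M\times(\tau,\tau+1))$ bounds. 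So your anticipated difficulty dissolves once you commit to the Lyapunov route you first named.
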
\\

\noindent Finally, the condition $(V_{L1})$ can be used to obtain an $L_1(\Omega)$ bound when $L_1=0$. As a result, we have the following corollary.\\

\begin{corollary}\label{cor2}
If the hypotheses of Theorem \ref{global} are satisfied, and additionally $(V_{L1})$ is satisfied with $L_1=0$, then $\Vert u(\cdot, \tau)\Vert_{1,\Omega}$ is bounded independent of $\tau>0$, and the conclusion of Theorem \ref{uniform} is true. 
\end{corollary}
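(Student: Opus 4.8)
The plan is to combine Theorem~\ref{global}, which already supplies a unique componentwise nonnegative global solution $u$, with an elementary mass-dissipation argument that upgrades the hypothesis $(V_{L1})$ with $L_1=0$ into a time-uniform $L_1(\Omega)$ bound; once that bound is in hand, Theorem~\ref{uniform} applies verbatim and delivers the uniform sup norm bound.

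First I would fix the constants $b_j>0$ from $(V_{L1})$ and record that the assumption $L_1=0$ says precisely that $\sum_{j=1}^m b_jF_j(z)\le 0$ and $\sum_{j=1}^m b_jG_j(z)\le 0$ for all $z\in\mathbb{R}^m_+$. Since the global solution is componentwise nonnegative, these inequalities may be evaluated along $z=u(x,t)$. Next I would set $\phi(t)=\int_\Omega\sum_{j=1}^m b_ju_j(x,t)\,dx$, differentiate under the integral sign, substitute the interior equations of (\ref{primary}), and integrate by parts using the boundary conditions $d_j\,\partial u_j/\partial\eta=G_j(u)$ on $M$, to get
\[
\phi'(t)=\int_M\sum_{j=1}^m b_jG_j(u)\,d\sigma+\int_\Omega\sum_{j=1}^m b_jF_j(u)\,dx\le 0 .
\]
Hence $\phi$ is nonincreasing, so $\phi(t)\le\phi(0)=\int_\Omega\sum_j b_jw_j\,dx$ for all $t\ge0$. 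Because each $b_j>0$ and $u_j\ge 0$, this gives $\|u_j(\cdot,t)\|_{1,\Omega}\le b_j^{-1}\phi(0)$ for all $j$ and all $t\ge 0$, so $\|u(\cdot,\tau)\|_{1,\Omega}$ is bounded independent of $\tau>0$, and integrating over $(\tau,\tau+1)$ shows $\|u\|_{1,\Omega\times(\tau,\tau+1)}\le\sum_j b_j^{-1}\phi(0)$ is bounded independent of $\tau>0$. At that point all hypotheses of Theorem~\ref{uniform} hold, and its conclusion — a unique, componentwise nonnegative global solution uniformly bounded in the sup norm — is exactly the assertion of the corollary.

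The only delicate point, and the one I expect to be the main (albeit mild) obstacle, is justifying the differentiation of $\phi$ and the integration by parts, since a solution in the sense of Definition~\ref{def} is a priori only $C^{2,1}$ in the interior and $C^{1,0}$ up to the boundary. I would dispose of this by the standard parabolic-smoothing remark: on $\overline\Omega\times[\delta,\infty)$ for any $\delta>0$ the solution is as regular as the data and the $C^{2+\sigma}$ boundary allow, and it is bounded on each slab $\overline\Omega\times[\delta,T]$, so the computation above is legitimate there and yields $\phi(t)\le\phi(\delta)$ for $t\ge\delta$. Letting $\delta\to 0^+$ and using continuity of $u$ on $\overline\Omega\times[0,\infty)$ (together with the compatibility condition $(V_N)$) gives $\phi(\delta)\to\phi(0)$, hence $\phi(t)\le\phi(0)$ on all of $[0,\infty)$. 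One could equivalently phrase the estimate through a weak formulation tested against the constant function, but the smoothing argument keeps the exposition shortest.
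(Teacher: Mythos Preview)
Your proposal is correct and follows essentially the same route as the paper: the paper obtains the time-uniform $L_1(\Omega)$ bound via the identical mass-dissipation computation (Lemma~5.2, where (\ref{interior}) with $L_1=0$ immediately gives $\frac{d}{dt}\int_\Omega\sum_j u_j\le 0$, after WLOG setting $b_j=1$), and then the corollary follows from Theorem~\ref{uniform}. Your additional care in justifying the differentiation and integration by parts via parabolic smoothing is a point the paper leaves implicit.
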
\\

We give some estimates for solutions of linear equations in the next section, and provide the proofs of our main results in the sections that follow.

\section {Estimates for Solutions of Linear Equations}
The estimates below will play a fundamental role in the work that follows. Let $d,T>0$, $N_1,N_2\in\mathbb{R}$, and consider the system
\begin{align}\label{neumann}
\varphi_t&=d\Delta \varphi+N_1\varphi+\theta &x\in\Omega,0<t<T \nonumber\\
d\frac{\partial\varphi}{\partial\eta}&=N_2\varphi+\gamma, & x\in M, 0<t<T, \\
\varphi&=\varphi_0 &x\in\Omega,t=0 \nonumber
\end{align}

\noindent The result below is a consequence of the proof of Theorem 9.1 in \cite{RefWorks:65}, and the comment following the proof on page 351.\\

\begin{lemma}\label{neu}
Let $p>1$. Suppose $\theta\in L_p(\Omega\times(0,T))$, $\varphi_0\in W_p^{(2-\frac{2}{p})}(\Omega)$, and $\gamma\in W_p^{(1-\frac{1}{p},\frac{1}{2}-\frac{1}{2p})}(M\times(0,T))$ with $p\ne 3$. In addition, when $p>3$ assume
\[ d\frac{\partial\varphi_0}{\partial\eta}=N_2\varphi_0+\gamma  \ \text{on} \ M\times\lbrace 0\rbrace.\]
Then $(\ref{neumann})$  has a unique solution $\varphi\in W_p^{2,1}(\Omega\times(0,T))$ and there exists $C$ dependent upon $\Omega$, $p$, $T$, $N_1$, $N_2$ and $d$, and independent of $\theta$, $\varphi_0$ and $\gamma$, such that 
\[ \Vert \varphi\Vert_{p,(\Omega\times(0,T))}^{(2,1)}\leq C\left(\Vert \theta\Vert_{p,(\Omega\times(0,T))}+\Vert\varphi_0\Vert_{p,\Omega}^{(2-\frac{2}{p})}+\Vert \gamma\Vert_{p,(\partial\Omega\times(0,T))}^{(1-\frac{1}{p}, \frac{1}{2}-\frac{1}{2p})}\right)\]
\end{lemma}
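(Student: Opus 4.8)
The plan is to reduce the problem to the theory of linear parabolic systems with oblique (in this case conormal) boundary conditions developed in Chapter IV of Ladyzhenskaya--Solonnikov--Ural'tseva \cite{RefWorks:65}, so that the conclusion follows essentially verbatim from Theorem 9.1 there together with the remark on p.~351 that covers the $L_p$ (as opposed to $L_2$) setting. First I would observe that the operator $\varphi \mapsto (\varphi_t - d\Delta\varphi, d\,\partial\varphi/\partial\eta - N_2\varphi, \varphi|_{t=0})$ is a bounded linear map from $W_p^{2,1}(\Omega\times(0,T))$ into $L_p(\Omega\times(0,T)) \times W_p^{(1-1/p,\,1/2-1/2p)}(M\times(0,T)) \times W_p^{(2-2/p)}(\Omega)$; this is just the trace theorem for anisotropic Sobolev spaces (again Chapter II of \cite{RefWorks:65}), and it shows the estimate is the natural one. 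The zeroth-order term $N_1\varphi$ in the interior equation is harmless: it can be absorbed either by treating it as part of the right-hand side and using a fixed-point/continuation argument on short time intervals, or simply by noting that the principal part $\varphi_t - d\Delta\varphi$ with conormal boundary operator is exactly the setting of the cited theorem, for which lower-order terms with bounded coefficients are explicitly permitted.

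The key steps, in order, are: (i) verify that the complementarity (Lopatinskii--Shapiro) condition holds for the pair (heat operator $\partial_t - d\Delta$, conormal derivative $d\,\partial_\eta$) — this is classical and is precisely the case handled in \cite{RefWorks:65}, so I would cite rather than recompute it; (ii) check the compatibility conditions: for $1 < p < 3$ with $p \ne 3$ no compatibility is needed because the trace $\partial\varphi/\partial\eta|_{t=0}$ is not well-defined in the relevant space (the order $1-1/p$ of $\gamma$ in the space variable is too low), whereas for $p > 3$ the trace exists and the hypothesis $d\,\partial\varphi_0/\partial\eta = N_2\varphi_0 + \gamma$ on $M\times\{0\}$ is exactly the zeroth-order compatibility condition required by the theorem; (iii) invoke Theorem 9.1 of \cite{RefWorks:65} (scalar case, one equation, $N=1$) to get existence, uniqueness in $W_p^{2,1}(\Omega\times(0,T))$, and the a priori bound with a constant $C = C(\Omega, p, T, N_1, N_2, d)$; (iv) track the dependence of $C$: it depends on $T$ through the length of the time interval and on $N_1, N_2, d$ through the coefficients, but crucially \emph{not} on the data $\theta, \varphi_0, \gamma$, which is the linearity/homogeneity statement we want.

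The main obstacle — really the only non-bookkeeping point — is the handling of the $p = 3$ exclusion and the compatibility dichotomy across $p < 3$ versus $p > 3$. The value $p = 3$ is exactly where the space $W_p^{(1-1/p,\,1/2-1/2p)}(M\times(0,T))$ has the trace $\gamma|_{t=0}$ sitting at the borderline regularity $2/3 = 2 - 2/p - 1$... and the compatibility condition would need to be formulated in a fractional Sobolev space on $M$, which the cited theorem does not do cleanly; hence it is simply excluded, matching the hypothesis $p \ne 3$ in the statement. I would make sure the write-up states explicitly that for $p \in (1,3)$ we are in the ``no compatibility needed'' regime and for $p \in (3,\infty)$ the stated boundary compatibility at $t=0$ is both necessary and sufficient, citing the comment following the proof of Theorem 9.1 on p.~351 of \cite{RefWorks:65} for the passage from the Hilbert-space ($p=2$) result to general $p$. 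Everything else is a direct quotation of the LSU machinery applied to a single scalar equation, so no genuinely new estimate is proved here; the lemma's role is to package the cited result in the exact form needed in the later sections.
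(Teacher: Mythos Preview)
Your proposal is correct and takes essentially the same approach as the paper: the paper does not prove the lemma at all but simply states that it is a consequence of the proof of Theorem~9.1 in \cite{RefWorks:65} together with the comment following that proof on page~351. Your plan is a more detailed unpacking of exactly that citation (verifying the complementarity condition, identifying the compatibility dichotomy at $p=3$, and tracking the constant), so there is nothing to correct or compare.
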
\\

The next result is given in section 5, Theorem 3.6, of \cite{RefWorks:1}.\\

\begin{lemma}\label{holder} Suppose $p>n+1$, and $\theta\in L_p(\Omega\times(0,T))$, $\gamma\in L_p(M\times(0,T))$, $N_1=N_2=0$ and $\varphi_0\in W^{2}_p(\Omega)$ such that 
\[ d\frac{\partial\varphi_0}{\partial\eta}=\gamma(x,0) \ on \ M.\]
Then there exists $C_{p,T}>0$ independent of $\theta$, $\gamma$ and $\varphi_0$, and the unique weak solution $\varphi\in V_2^{1,\frac{1}{2}}(\Omega_T)$ of $(\ref{neumann})$, such that if $0<\beta<1-\frac{n+1}{p}$ then  \[ \vert \varphi\vert_{\Omega_{T}}^{(\beta)}\leq C_{p,T}\left(\Vert\theta\Vert_{p,\Omega_T}+\Vert\gamma\Vert_{p,M_T}+\Vert\varphi_0\Vert_{p,\Omega}^{(2)}\right),\]
where $\vert \varphi\vert_{\Omega_{\hat T}}^{(\beta)}$ is the H\"older norm of $\varphi$ with exponent $\beta$.
\end{lemma}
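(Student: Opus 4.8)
\medskip
\noindent\textbf{Proof proposal.} The plan is to split the data, writing $\varphi=\varphi^{(1)}+\varphi^{(2)}+\varphi^{(3)}$, where $\varphi^{(1)}$, $\varphi^{(2)}$, $\varphi^{(3)}$ solve $(\ref{neumann})$ with $N_1=N_2=0$ and with, respectively, only $\varphi_0$ nonzero, only $\theta$ nonzero, and only $\gamma$ nonzero. Existence of a weak solution in $V_2^{1,\frac12}(\Omega_T)$ is classical, since $p>n+1\ge 2$ forces the data into $L_2$, and uniqueness in $V_2^{1,\frac12}(\Omega_T)$ follows from the standard energy identity for $(\ref{neumann})$ with zero data. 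It therefore suffices to bound $|\varphi^{(k)}|^{(\beta)}_{\Omega_T}$ for $k=1,2,3$ by the corresponding term on the right-hand side.

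The first two pieces reduce to results already at hand. For $\varphi^{(1)}$: since $p>n+1>n$, Morrey's embedding gives $W_p^2(\Omega)\hookrightarrow C^{1+\beta}(\overline\Omega)$ for $\beta<1-\tfrac{n}{p}$, and the analytic semigroup generated by $d\Delta$ with homogeneous Neumann boundary conditions carries $C^{1+\beta}$ data to a solution parabolically $\beta$-H\"older on $\overline\Omega\times[0,T]$, with norm bounded by $\|\varphi_0\|^{(2)}_{p,\Omega}$ (see \cite{RefWorks:65}, Ch.~IV). For $\varphi^{(2)}$: Lemma \ref{neu} with $N_1=N_2=0$, $\varphi_0=0$, $\gamma=0$ gives $\varphi^{(2)}\in W_p^{2,1}(\Omega_T)$ with $\|\varphi^{(2)}\|^{(2,1)}_{p,\Omega_T}\le C\|\theta\|_{p,\Omega_T}$, and the parabolic Sobolev embedding $W_p^{2,1}(\Omega_T)\hookrightarrow C^{\beta,\beta/2}(\overline\Omega_T)$, valid whenever $0<\beta<2-\tfrac{n+2}{p}$ and hence (since $p>1$) for every $\beta<1-\tfrac{n+1}{p}$, does the rest.

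The real work is $\varphi^{(3)}$, and this is where the hypothesis $\gamma\in L_p(M_T)$ bites: it is strictly weaker than the fractional trace regularity $\gamma\in W_p^{(1-1/p,\,1/2-1/2p)}(M_T)$ needed to apply Lemma \ref{neu}, so one cannot get $W_p^{2,1}$ bounds and must argue through a boundary potential. Writing $G(x,y,\tau)$ for the Neumann heat kernel of $\partial_t-d\Delta$ on $\Omega$, Duhamel's formula gives
\[ \varphi^{(3)}(x,t)=\int_0^t\!\!\int_M G(x,y,t-s)\,\gamma(y,s)\,dS_y\,ds. \]
Using the Gaussian bound $0\le G(x,y,\tau)\le C\tau^{-n/2}e^{-c|x-y|^2/\tau}$, H\"older's inequality in $(y,s)$ with exponents $p,p'$, and $\int_M e^{-c|x-y|^2/\tau}\,dS_y\le C\tau^{(n-1)/2}$ (because $\dim M=n-1$), one obtains
\[ |\varphi^{(3)}(x,t)|\le C\,\|\gamma\|_{p,M_T}\Big(\int_0^t(t-s)^{\frac{n-1}{2}-\frac{np'}{2}}ds\Big)^{1/p'}, \]
and the time integral converges exactly when $p'<\tfrac{n+1}{n}$, i.e.\ exactly when $p>n+1$; thus $\|\varphi^{(3)}\|_{\infty,\Omega_T}\le C_{p,T}\|\gamma\|_{p,M_T}$. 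For the H\"older seminorm one runs the same computation on the increments $G(x,y,\tau)-G(x',y,\tau)$ and $G(x,y,\tau)-G(x,y,\tau')$, splitting the $s$-integral at $s=t-|x-x'|^2$ (resp.\ $s=t-|t-t'|$): the short-time part is controlled by the bound just obtained applied on an interval of length $|x-x'|^2$ (resp.\ $|t-t'|$), and the long-time part by the gradient bound $|\nabla_x G(x,y,\tau)|\le C\tau^{-(n+1)/2}e^{-c|x-y|^2/\tau}$ (resp.\ $|\partial_\tau G|\le C\tau^{-(n+2)/2}e^{-c|x-y|^2/\tau}$); tracking exponents, both pieces come out $\lesssim |x-x'|^{1-(n+1)/p}$ (resp.\ $|t-t'|^{(1-(n+1)/p)/2}$), so $\varphi^{(3)}$ is parabolically $\beta$-H\"older for every $\beta<1-\tfrac{n+1}{p}$, with seminorm linear in $\|\gamma\|_{p,M_T}$. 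Adding the three bounds and taking $C_{p,T}$ to be the sum of the constants finishes the proof.

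The one genuinely technical ingredient is producing the kernel bounds on $G$, $\nabla_x G$, $\partial_\tau G$ when $M$ is only of class $C^{2+\sigma}$; I expect this to be the main obstacle, and the cleanest route is to quote the $L_p\to C^\beta$ estimates for single-layer heat potentials from \cite{RefWorks:65}, Chapter IV, rather than re-deriving the kernel estimates from scratch -- this is, in effect, the path taken in \cite{RefWorks:1}.
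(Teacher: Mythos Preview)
The paper does not supply a proof of this lemma: the sentence immediately preceding its statement reads ``The next result is given in section 5, Theorem 3.6, of \cite{RefWorks:1},'' and no argument follows. So there is nothing in the paper itself to compare your proposal against beyond that citation.

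Taken on its own, your three-piece decomposition and the single-layer heat-potential estimate for $\varphi^{(3)}$ constitute the standard route, and you already observe in your closing paragraph that this is, in effect, what \cite{RefWorks:1} does---quoting the $L_p\to C^\beta$ potential estimates from \cite{RefWorks:65}, Chapter~IV, rather than re-deriving the Gaussian kernel bounds on a $C^{2+\sigma}$ boundary from scratch. The exponent bookkeeping is correct: the $(y,s)$-H\"older inequality on the boundary potential gives a time integrand $(t-s)^{(n-1)/2-np'/2}$, which is integrable precisely when $p>n+1$, and the usual near/far split of the $s$-integral at $|x-x'|^2$ (resp.\ $|t-t'|$) produces the parabolic H\"older exponent $\beta<1-(n+1)/p$ claimed. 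One small point worth a sentence in a full write-up: splitting the data destroys the compatibility hypothesis, since your $\varphi^{(1)}$ carries initial data $\varphi_0$ but homogeneous Neumann data, and $\partial_\eta\varphi_0$ need not vanish on $M$ (indeed, the hypothesis $d\,\partial_\eta\varphi_0=\gamma(\cdot,0)$ is already somewhat informal, as $\gamma\in L_p(M_T)$ need not have a time-trace). This is harmless at the low regularity level $\beta<1$ being claimed---a half-line model computation shows the incompatibility costs at most $C^{1/2}$ in $t$---but it deserves acknowledgment.
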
\\

We conclude this section with the following {\em seemingly} well known result, which plays an important role in proof of Theorems \ref{global} and \ref{uniform}. For lack of a good reference, we have included the proof.\\

\begin{lemma}\label{omega}
If $\gamma\geq 1$ and $\epsilon>0$, then there exists $M_{\epsilon, \gamma}>0$ such that 
\begin{align}\label{firstone} 
\Vert v\Vert_{2,\Omega}^2\leq \epsilon\Vert\nabla v\Vert_{2,\Omega}^2+M_{\epsilon,\gamma}\Vert v^{\frac{2}{\gamma}}\Vert_{1,\Omega}^{\gamma}
\end{align}
\begin{align}\label{secondone} 
\Vert v\Vert_{2,M}^2\leq \epsilon\Vert\nabla v\Vert_{2,\Omega}^2+M_{\epsilon,\gamma}\Vert v^{\frac{2}{\gamma}}\Vert_{1,\Omega}^{\gamma}
\end{align}
for all $v\in H^1(\Omega)$.
\end{lemma}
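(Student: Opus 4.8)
The plan is to derive both inequalities from two standard ingredients: an interpolation (Gagliardo--Nirenberg / Ehrling type) inequality, and Young's inequality to absorb the gradient term with an arbitrarily small constant. For \eqref{firstone}, I would start from the Gagliardo--Nirenberg inequality on the bounded domain $\Omega$ (with Lipschitz boundary), which gives a constant $C=C(\Omega,\gamma,n)$ such that
\[
\Vert v\Vert_{2,\Omega}\le C\,\Vert v\Vert_{H^1(\Omega)}^{\alpha}\,\Vert v\Vert_{2/\gamma,\Omega}^{1-\alpha}
\]
for the appropriate interpolation exponent $\alpha\in[0,1)$ determined by matching scaling between $L^2$, $H^1$ and $L^{2/\gamma}$; since $\gamma\ge 1$ the exponent $2/\gamma\le 2$, so this is a genuine interpolation between a higher and a lower Lebesgue exponent and $\alpha<1$. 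Squaring, bounding $\Vert v\Vert_{H^1(\Omega)}^2\le \Vert\nabla v\Vert_{2,\Omega}^2+\Vert v\Vert_{2,\Omega}^2$, and then applying Young's inequality with conjugate exponents $1/\alpha$ and $1/(1-\alpha)$ to the product $\Vert v\Vert_{H^1}^{2\alpha}\cdot\Vert v\Vert_{2/\gamma}^{2(1-\alpha)}$ lets me put any prescribed $\epsilon$ in front of the $H^1$ norm at the cost of a large constant $M_{\epsilon,\gamma}$ multiplying $\Vert v\Vert_{2/\gamma,\Omega}^{2}$, and one checks $\Vert v\Vert_{2/\gamma,\Omega}^{2}=\Vert v^{2/\gamma}\Vert_{1,\Omega}^{\gamma}$. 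This produces an inequality of the form $\Vert v\Vert_{2,\Omega}^2\le \tfrac12\Vert v\Vert_{2,\Omega}^2+\epsilon'\Vert\nabla v\Vert_{2,\Omega}^2+M\Vert v^{2/\gamma}\Vert_{1,\Omega}^{\gamma}$ after splitting the $H^1$ norm; absorbing the $\tfrac12\Vert v\Vert_{2,\Omega}^2$ on the left and relabeling constants gives \eqref{firstone}.

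For \eqref{secondone} I would do the same thing but start from a trace interpolation inequality: there is a constant such that $\Vert v\Vert_{2,M}\le C\,\Vert v\Vert_{H^1(\Omega)}^{\beta}\,\Vert v\Vert_{2/\gamma,\Omega}^{1-\beta}$ for a suitable $\beta\in(0,1)$. This follows by composing the continuity of the trace operator $H^s(\Omega)\to L^2(M)$ for $s$ slightly larger than $1/2$ with an interpolation inequality bounding $\Vert v\Vert_{H^s(\Omega)}$ by $\Vert v\Vert_{H^1(\Omega)}^{\theta}\Vert v\Vert_{2/\gamma,\Omega}^{1-\theta}$; alternatively one can use the classical estimate $\Vert v\Vert_{2,M}^2\le C(\epsilon\Vert\nabla v\Vert_{2,\Omega}^2+C_\epsilon\Vert v\Vert_{1,\Omega}^2)$ as a base case (the usual ``trace term is lower order'' inequality) and then upgrade the $L^1$ norm on the right to the $L^{2/\gamma}$-type norm by H\"older, using $\gamma\ge 1$. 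Once the multiplicative trace--interpolation inequality is in hand, the argument is identical to the previous paragraph: split $\Vert v\Vert_{H^1}^2$, apply Young's inequality to trade the $H^1$ factor for a small $\epsilon$ coefficient, absorb, and rewrite $\Vert v\Vert_{2/\gamma,\Omega}^2$ as $\Vert v^{2/\gamma}\Vert_{1,\Omega}^{\gamma}$. It is convenient to take the same $M_{\epsilon,\gamma}$ for both inequalities by choosing the larger of the two constants produced.

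The only genuine subtlety — and the step I expect to be the main obstacle to state cleanly — is making sure the interpolation exponents are legitimate, i.e. that one really can interpolate $L^2$ between $H^1$ and $L^{2/\gamma}$ for \emph{every} $\gamma\ge 1$ (including $\gamma=1$, where $L^{2/\gamma}=L^2$ and the inequality degenerates to the trivial Ehrling inequality $\Vert v\Vert_{2}^2\le\epsilon\Vert\nabla v\Vert_2^2+M\Vert v\Vert_2^2$, which is fine but needs the trivial observation that one may keep a non-absorbed $\Vert v\Vert_2^2$ term on the right — and here the right side $M\Vert v^{2}\Vert_{1,\Omega}^{1}=M\Vert v\Vert_{2,\Omega}^2$ is exactly that), and in the trace case that $2/\gamma$ stays in the admissible range for the trace-interpolation to hold in dimension $n$. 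For large $\gamma$ and large $n$ the lower exponent $2/\gamma$ can be quite small, but since we only need to interpolate \emph{downward} from $L^2$ (not to control high derivatives), the Gagliardo--Nirenberg inequality still applies with $\alpha<1$, so no dimensional restriction is needed; I would remark on this explicitly. Everything else is bookkeeping with Young's inequality.
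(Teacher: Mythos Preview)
Your approach is correct, but it is genuinely different from the paper's. The paper proves \eqref{firstone} by a compactness--contradiction argument in the spirit of Ehrling's lemma: assuming failure, one gets a normalized sequence $(v_k)$ with $\Vert v_k\Vert_{2,\Omega}=1$, bounded in $H^1(\Omega)$, and with $\Vert v_k^{2/\gamma}\Vert_{1,\Omega}\to 0$; Rellich--Kondrachov yields an $L^2$--limit $v$ with $\Vert v\Vert_{2,\Omega}=1$, while Fatou forces $\Vert v^{2/\gamma}\Vert_{1,\Omega}=0$, a contradiction. Inequality \eqref{secondone} is then obtained in one line by inserting \eqref{firstone} into the standard trace estimate $\Vert v\Vert_{2,M}^2\le \delta\Vert\nabla v\Vert_{2,\Omega}^2+C_\delta\Vert v\Vert_{2,\Omega}^2$ (equation (2.25) in Ladyzhenskaia--Ural'tseva). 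Your route via Gagliardo--Nirenberg and Young is the constructive counterpart: it gives, in principle, explicit constants, at the price of tracking interpolation exponents. Two small remarks on your version: since $2/\gamma$ may drop below $1$ when $\gamma>2$, it is cleaner to phrase the needed interpolation as log-convexity of $L^p$ quasinorms (H\"older between $L^{2^*}$ and $L^{2/\gamma}$) rather than invoke ``standard'' GN; and your alternative route to \eqref{secondone} via upgrading an $L^1$ lower-order term to $L^{2/\gamma}$ by H\"older only goes the right direction when $2/\gamma\ge 1$, i.e.\ $\gamma\le 2$. Once you have \eqref{firstone}, the paper's one-line reduction for \eqref{secondone} is simpler and sidesteps both issues.
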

\begin{proof}
We start with (\ref{firstone}). Let $\gamma\geq 1$ and $\epsilon>0$. Suppose by way of contradiction that for every natural number $k$, there is a function $v_k\in H^{1}(\Omega)$ such that 
\[ \Vert v_k\Vert_{2,\Omega}^{2}\geq \epsilon \Vert \nabla v_k\Vert_{2,\Omega}^2+k\Vert v_k^{\frac{2}{\gamma}}\Vert^{\gamma}_{1,\Omega}\]for all $k$. From the homogenity of the inequality, we can assume \[ \Vert v_k\Vert_{2,\Omega}^2=1\] for all $k$. As a result, the sequence $\lbrace v_k\rbrace$ is bounded in $H^{1}(\Omega)$. In addition \[ \Vert v_k^{\frac{2}{\gamma}}\Vert_{1,\Omega}\rightarrow 0 \quad\text {as} \quad k\rightarrow \infty\] Now, since $H^{1}(\Omega)$ is compactly embedded in $L_2(\Omega)$, there is a subsequence $\lbrace v_{k_j}\rbrace$ of $\lbrace v_k\rbrace$ and a function $v\in L_2(\Omega)$ such that $\Vert v_{k_j} -v\Vert_{2,\Omega}\rightarrow 0$ as $j\rightarrow \infty$. However, from above, it is apparent that $\Vert v^{\frac{2}{\gamma}}\Vert_{1,\Omega}=0$, implying $v=0$ almost everywhere, which contradicts the fact that $\Vert v\Vert_{2,\Omega}=\lim_{j\rightarrow \infty}\Vert v_{k_j}\Vert_{2,\Omega}=1$. Therefore (\ref{firstone}) is true. Finally, (\ref{secondone}) follows from (\ref{firstone}) by applying equation (2.25) on page 49 in \cite{RefWorks:69}.
\end{proof}

\section{Proofs of Theorem \ref{martinthm} and Corollary \ref{cor1}} We begin with the proof of Theorem \ref{martinthm}. Assume $m=2$, and $(V_N)$, $(V_{F})$, $(V_{QP})$, $(V_{L1})$ and $(V_{Poly})$ hold. If $T_{\max}=\infty$, then there is nothing to do. So, assume $T=T_{\max}<\infty$. We can assume WLOG that we have $\|u_1(\cdot,t)\|_{\infty,\Omega}\le h(t)$ for all $0\le t<T_{\max}$, and that $b_1=b_2=1$ in $(V_{L1})$. Let $1<p<\infty$ and set $p'=\frac{p}{p-1}$. Suppose $\theta\in L_{p'}(\Omega_T)$ such that $\theta\ge 0$ and $\|\theta\|_{p',\Omega_T}=1$. Furthermore, let $L_2\geq \max \lbrace \frac{d_2L_1}{d_1},L_1\rbrace$ and suppose $\varphi$ solves 
\begin{align}\label{phieq}
\varphi_t+d_2\Delta\varphi&=-L_1\varphi-\theta&\text{on }\Omega_T,\nonumber\\
d_2\frac{\partial}{\partial \eta}\varphi&=L_2\varphi&\text{on }M_T,\\
\varphi&=0&\text{on }\Omega\times\left\{T\right\}.\nonumber
\end{align}
At first glance, (\ref{phieq}) may appear to be a backwards heat equation. However, the substitution $\tau=T-t$ immediately reveals that it is actually the forward heat equation. Moreover, $\varphi\ge 0$ from the same argument that is used to prove Theorem \ref{local}. In addition, from Lemma \ref{neu}, there is a constant $C>0$ dependent on $p$, $d_1$, $d_2$, $\Omega$, $L_1$ and $L_2$, and independent of $\theta$ such that 
$\|\varphi\|_{p',\Omega_T}^{(2,1)}\le C.$
Now we use integration by parts and $(V_{L1})$ to obtain
\begin{align}\label{duality1}
\int_0^T\int_\Omega(u_1+u_2)\theta dxdt&= \int_0^T\int_\Omega(u_1+u_2)(-\varphi_t-d_2\Delta\varphi-L_1\varphi)dxdt\nonumber\\
&=\int_\Omega(w_1+w_2)\varphi(x,0)dx+\int_0^T\int_\Omega\varphi((u_1)_t+(u_2)_t)dxdt\nonumber\\
&-\int_0^T\int_\Omega(u_1+u_2)(d_2\Delta\varphi-L_1\varphi)dxdt\nonumber\\
&\le\int_\Omega(w_1+w_2)\varphi(x,0)dx+\int_0^T\int_\Omega\varphi(d_1\Delta u_1+d_2\Delta u_2)dxdt\nonumber\\
&-\int_0^T\int_\Omega(u_1+u_2)d_2\Delta\varphi dxdt+\int_0^T\int_\Omega L_1\varphi dxdt,
\end{align}
from $(V_{L1})$. Also, note that integration by parts and $(V_{L1})$ imply
\begin{align}\label{duality1.1}
\int_0^T\int_\Omega\varphi(d_1\Delta u_1&+d_2\Delta u_2)dxdt=\int_0^T\int_M\varphi(G_1(u)+G_2(u))d\sigma dt\nonumber\\
&-\int_0^T\int_\Omega(d_1u_1+d_2u_2)\frac{L_2}{d_2}\varphi d\sigma dt+\int_0^T(d_1u_1+d_2u_2)\Delta\varphi dxdt\nonumber\\
&\le\int_0^T\int_M\varphi L_1(u_1+u_2+1)d\sigma dt-\int_0^T\int_\Omega(d_1u_1+d_2u_2)\frac{L_2}{d_2}\varphi d\sigma dt\nonumber\\
&+\int_0^T(d_1u_1+d_2u_2)\Delta\varphi dxdt\nonumber\\
&\le\int_0^T\int_M\varphi \left[\left(L_1-\frac{d_1}{d_2}L_2\right)u_1+\left(L_1-L_2\right)u_2+L_1\right] d\sigma dt\nonumber\\
&+\int_0^T(d_1u_1+d_2u_2)\Delta\varphi dxdt\nonumber\\
&\le \int_0^T\int_M L_1\varphi d\sigma dt+\int_0^T(d_1u_1+d_2u_2)\Delta\varphi dxdt.
\end{align}
From the assumption on $L_2$. Therefore, if we combine (\ref{duality1}) and (\ref{duality1.1}), we have
\begin{align}\label{duality1.2}
\int_0^T\int_\Omega(u_1+u_2)\theta dxdt &\le \int_\Omega(w_1+w_2)\varphi(x,0)dx+\int_0^T\int_\Omega(d_1-d_2)u_1\Delta \varphi dxdt\nonumber\\
&+\int_0^T\int_\Omega L_1\varphi dxdt+\int_0^T\int_M L_1 \varphi dxd\sigma
\end{align}
Recall that $\|u_1(\cdot,t)\|_{\infty,\Omega}\le h(t)$, and $\| \varphi\|_{p',\Omega_T}^{(2,1)}\le C$. Also, integrating (\ref{phieq}) reveals that $\|\varphi(\cdot,0)\|_{1,\Omega}$ can be bounded independent of $\theta$, by using the norm bound on $\varphi$, and $\|\theta\|_{p',\Omega_T}=1$. In addition, trace embedding implies $\|\varphi\|_{1,M_T}$ can be bounded in terms of $\|\varphi\|_{p',\Omega_T}^{(2,1)}$, which can be bounded independent of $\theta$, for the same reason as above. Therefore, by applying duality to (\ref{duality1.2}), we see that $\|u_2\|_{p,\Omega_T}$ is bounded in terms of $p$, $h(T)$, $L_1$, $d_1$, $d_2$ and $C$. Also, since $1<p<\infty$ is arbitrary, we have this estimate for every $1<p<\infty$. Note that the sup norm bound on $u_1$, the $L_p(\Omega_T)$ bounds on $u_2$ for all $1<p<\infty$, and $(V_{Poly})$, imply we have $L_q(\Omega_T)$ bounds on $F_1(u)$ and $F_2(u)$ for all $1<q<\infty$. 

Now,we use the bounds above and assumption (\ref{nearlykoua}) to show $\|u_2\|_{p,M_T}$ for all $1<p<\infty$. To this end, suppose $p\in \mathbb{N}$ such that $p\ge 2$, and let $\theta>\max\left\{K,\frac{d_1+d_2}{2\sqrt{d_1d_2}}\right\}$. We will see the reason for this choice below. To this end, we employ a modification of an argument given in \cite{RefWorks:8} for the case $m=2$. To simplify notation, we define $u=(u_1,u_2)$, and if $a,b\ge 0$ then $u^{(a,b)}=u_1^au_2^b$. 

Define
$$L(t)=\int_\Omega \sum_{\beta=0}^p \frac{p!}{\beta!(p-\beta)!}\theta^{\beta^2}u^{(\beta,p-\beta)}dx.$$ 
Then
\begin{align}\label{Lprime1}
L'(t)&=\int_\Omega \sum_{\beta=0}^p \frac{p!}{\beta!(p-\beta)!}\theta^{\beta^2}\left(\beta u^{(\beta-1,p-\beta)}(u_1)_t+(p-\beta)u^{(\beta,p-\beta-1)}(u_2)_t\right)dx\nonumber\\
&=\int_\Omega \left(pu_2^{p-1}(u_2)_t+p\theta^{p^2}u_1^{p-1}(u_1)_t\right)dx+X_1+X_2,
\end{align}
where
\begin{align}\label{Lprime2.1}
X_1&=\int_\Omega \sum_{\beta=1}^{p-1} \frac{p!}{(\beta-1)!(p-\beta)!}\theta^{\beta^2} u^{(\beta-1,p-\beta)}(u_1)_t dx\nonumber\\
&=\int_\Omega p\theta u_2^{p-1}(u_1)_t dx+\int_\Omega \sum_{\beta=2}^{p-1} \frac{p!}{(\beta-1)!(p-\beta)!}\theta^{\beta^2} u^{(\beta-1,p-\beta)}(u_1)_t dx\nonumber\\
&=\int_\Omega p\theta u_2^{p-1}(u_1)_t dx+\int_\Omega \sum_{\beta=1}^{p-2} \frac{p!}{\beta!(p-\beta-1)!}\theta^{(\beta+1)^2} u^{(\beta,p-\beta-1)}(u_1)_t dx\nonumber\\
&=\int_\Omega p\theta u_2^{p-1}(u_1)_t dx+\int_\Omega \sum_{\beta=1}^{p-2} \frac{p!}{\beta!(p-\beta-1)!}\theta^{\beta^2} u^{(\beta,p-\beta-1)}\theta^{2\beta+1}(u_1)_t dx
\end{align}
and
\begin{align}\label{Lprime2.2}
X_2&=\int_\Omega \sum_{\beta=1}^{p-1} \frac{p!}{\beta!(p-\beta-1)!}\theta^{\beta^2} u^{(\beta,p-\beta-1)}(u_2)_t dx\nonumber\\
&=\int_\Omega p\theta^{(p-1)^2}u_1^{p-1}(u_2)_t dx+\int_\Omega \sum_{\beta=1}^{p-2} \frac{p!}{\beta!(p-\beta-1)!}\theta^{\beta^2} u^{(\beta,p-\beta-1)}(u_2)_t dx.
\end{align}
Combining (\ref{Lprime1})-(\ref{Lprime2.2}) gives
\begin{align}\label{Lprime4}
L'(t)&=\int_\Omega\sum_{\beta=0}^{p-1}\frac{p!}{\beta!(p-1-\beta)!}\theta^{\beta^2}u^{(\beta,p-1-\beta)}\left(\theta^{2\beta+1}(u_1)_t+(u_2)_t\right)dx\nonumber\\
&=I+II,
\end{align}
where
\begin{align}\label{Lprime5}
I=\int_\Omega\sum_{\beta=0}^{p-1}\frac{p!}{\beta!(p-1-\beta)!}\theta^{\beta^2}u^{(\beta,p-1-\beta)}\left(\theta^{2\beta+1}F_1(u)+F_2(u)\right)dx
\end{align}
and
\begin{align}\label{Lprime6}
II=\int_\Omega\sum_{\beta=0}^{p-1}\frac{p!}{\beta!(p-1-\beta)!}\theta^{\beta^2}u^{(\beta,p-1-\beta)}\left(\theta^{2\beta+1}d_1\Delta u_1+d_2\Delta u_2\right)dx.
\end{align}
Note that $\int_0^T Idx$ is bounded because $(V_{Poly})$ holds, and as we have shown above, $\|u_i\|_{q,\Omega_T}$ is bounded for $i=1,2$ for all $1<q<\infty$. 

Now, consider II. Similar to the calculations for $L'(t)$, we can show
\begin{align}\label{Lprime7}
II=&-\int_\Omega\sum_{\beta=0}^{p-2}\frac{p!}{\beta!(p-2-\beta)!}\theta^{\beta^2}u^{(\beta,p-2-\beta)}\sum_{k=1}^n\sum_{i,j=1}^2 b_{i,j}\frac{\partial u_i}{\partial x_k}\frac{\partial u_j}{\partial x_k}\nonumber\\
&+\int_M\sum_{\beta=0}^{p-1}\frac{p!}{\beta!(p-1-\beta)!}\theta^{\beta^2}u^{(\beta,p-1-\beta)}\left(\theta^{2\beta+1}G_1(u)+G_2(u)\right)d\sigma,
\end{align} 
where 
$$\left(b_{i,j}\right)=\begin{pmatrix}d_1\theta^{4\beta+4}&\frac{d_1+d_2}{2}\theta^{2\beta+1}\\\frac{d_1+d_2}{2}\theta^{2\beta+1}&d_2\end{pmatrix}.$$
From the choice of $\theta$, this matrix is positive definite, so there exists $\alpha_{\theta,p}>0$ such that
\begin{align}\label{Lprime8}
L'(t)+&\alpha_{\theta,p}\int_\Omega \left(|\nabla (u_1)^{p/2}|^2+|\nabla (u_2)^{p/2}|^2\right)dx\le I \nonumber\\
&+ \int_M\sum_{\beta=0}^{p-1}\frac{p!}{\beta!(p-1-\beta)!}\theta^{\beta^2}u^{(\beta,p-1-\beta)}L_{\theta^{2\beta+1}}\left(u_1+u_2+1\right)d\sigma\nonumber\\
&\le I+N_{p,\theta,M}\left[\int_M\left(u_1^p+u_2^p\right)d\sigma+1\right]
\end{align} 
from (\ref{nearlykoua}), for some $N_{p,\theta,M}>0$. So, if we apply Lemma \ref{omega}, we can see there exists $\tilde{N}_{p,\theta,M}>0$ such that
\begin{align}\label{Lprime9}
L'(t)+N_{p,\theta,M}\int_M(u_1^p+u_2^p)d\sigma\le I+\tilde{N}_{p,\theta,M}\left(\int_{\Omega}(u_1+u_2)\ dx \right)^p+N_{p,\theta,M}.
\end{align}
Finally, if we integrate over time, we find that $\|u_2\|_{p,M_T}$ is bounded in terms of $p$, $M$, $\Omega$, $\theta$, $h(T)$, $w_1$, $w_2$ and $\|u_2\|_{p,\Omega_T}$. Since this holds for every natural number $p\ge 2$, we can use the assumption $(V_{Poly})$ and the bounds above, along with Lemma \ref{holder} to conclude that $\|u\|_{\infty,\Omega_T}<\infty$. From Theorem \ref{local}, this contradicts our assumption that $T_{\max}<\infty$. Therefore, $T_{\max}=\infty$, and Theorem \ref{martinthm} is proved.

Now, let's prove Corollary \ref{cor1}. Note that from the first portion of the proof above, we have $L_q(\Omega_T)$ bounds on $F_1(u)$ and $F_2(u)$ for all $1<q<\infty$. Let $W$ be an open subset of $\Omega$ such that $\overline{W}\subset\Omega$, and choose an open subset $\tilde{W}$ of $\Omega$ with smooth boundary, such that $\overline{W}\subset\tilde{W}$. Then, from the proof of Theorem 9.1 in \cite{RefWorks:65}, we are assured that if $1<q<\infty$ then there exists $C>0$ dependent on $q$, $d_i$ and the distance from $\partial W$ to $M$, such that
$$\|u_i\|_{q,\tilde{W}\times(0,t)}^{(2,1)}\le C\left(\|F_i(u)\|_{q,\Omega_t}+\|w_i\|_{C^2(\overline\Omega)} \right).$$
If we choose $q$ sufficiently large, then we get the result. 

\section{Proofs of Theorems \ref{global} and \ref{uniform}, and Corollary \ref{cor2}}
In order to derive $L_p$ estimates of $u$ on $\Omega$ and $M$, we create a functional defined in \cite{RefWorks:8}. To this end, let $A_{ij}=\frac{d_i+d_j}{2\sqrt{d_id_j}}$ for all $i,j=1, ....,m$, and, as in \cite{RefWorks:8}, for $i=1,...,m-1$, let $\theta_i>0$, such that \[ K_l^l>0\quad \quad \text {for}\quad l=2, ..., m,\] where \[ K_l^r=K_{r-1}^{r-1}\cdot K_l^{r-1}-[H_l^{r-1}]^2, \quad r=3, ..., l,\]
\[ H_l^r= \det_{1\leq i,j\leq l}\left( ({d_{i,j}})_{\substack{{i\ne l,...,r+1}\\{j\ne l-1,...,r}}}\right)\cdot \prod_{k=1}^{k=r-2}(\det \ [k])^{2^{(r-k-2)}}, \quad r=3, ...,l-1,\]
\[ K_l^2=\underbrace{d_1d_l \prod_{k=1}^{l-1}{\theta_k}^{2(p_k+1)^2}\cdot \prod_{k=l}^{m-1}{\theta_k}^{2(p_k+2)^2}}_{\text{positive values}} \cdot \left( \prod_{k=1}^{l-1}{\theta_k}^2-A_{1l}^2\right) \] and \[ H_l^2=\underbrace{d_1\sqrt{d_2d_l}{\theta_1}^{2(p_1+1)^2}\prod_{k=2}^{l-1}{\theta_k}^{(p_k+2)^2+(p_k+1)^2}\cdot \prod_{k=l}^{m-1}{\theta_k}^{2(p_k+2)^2}}_{\text{positive values}}\cdot \left({\theta_1}^2A_{2l}-A_{12}A_{1l}\right).\]
Here, $\det_{1\leq i,j\leq l}\left( ({d_{i,j}})_{\substack{{i\ne l,...,r+1}\\{j\ne l-1,...,r}}}\right)$ denotes the determinant of $r$ square symmetric matrix obtained from $(d_{i,j})_{1\leq i,j\leq m}$ by removing the $(r+1)\text{th}, (r+2)\text{th} ,....,l\text{th}$ rows and the $r\text{th}, (r+1)\text{th}, ...., (l-1)\text{th}$ columns, and $\det \ [1]$, ... , $\det \ [m]$ are the minors of the matrix $(a_{l,k})_{1\le l,k\le m}$.  The elements of the matrix $(d_{i,j})$ are \[ d_{ij}=\frac{d_i+d_j}{2}\theta_1^{(p_1)^2}\dots  \theta_{(i-1)}^{p_{i-1}^2} \theta_i^{(p_i+1)^2}\dots \theta_{j-1}^{(p_{j-1}+1)^2}\theta_j^{(p_j+2)^2}\dots \theta_{(m-1)}^{(p_{m-1}+2)^2} \]
The following lemma is given in $\cite{RefWorks:8}$.\\
\begin{lemma}\label{diff}
Let $H_{p_m}$ be the homogeneous polynomial such that \[H_{p_m}(u(x,t))= \sum_{p_{m-1}}^{p_m}\cdot\cdot\cdot\sum_{{p_1}=0}^{p_2}C_{p_m}^{p_{m-1}}\cdot\cdot\cdot C_{p_2}^{p_1}\theta_1^{{p_1}^2}\cdot\cdot\cdot\theta_{(m-1)}^{p^2_{(m-1)}}{u_1}^{p_1}{u_2}^{p_2-p_1}\cdot\cdot\cdot {u_m}^{p_m-p_{m-1}}\] with $p_m\geq 2$ being a positive integer, $C_{p_j}^{p_i}=\frac{p_j!}{p_i!(p_j-p_i)!}$, and $\theta_i\geq 0$ for all $i$. Then \[ \partial_{u_i}H_{p_m}= p_m \sum_{p_{m-1}=0}^{p_{m}-1}\cdot\cdot\cdot \sum_{p_1=0}^{p_2}C_{p_{m}-1}^{p_{m-1}}\cdot\cdot\cdot C_{p_2}^{p_1}\theta_1^{{p_1}^2}\cdot\cdot\cdot \theta_{i-1}^{p^2_{(i-1)}}\theta_i^{(p_{i}+1)^2} \cdot\cdot\cdot \theta_{(m-1)}^{(p_{(m-1)}+1)^2} \times u_1^{p_1}u_2^{p_2-p_1}\cdot\cdot\cdot u_m^{(p_m-1)-p_{m-1}}  \] for all $i=2, ..., m-1$.
\end{lemma}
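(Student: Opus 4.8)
The plan is to prove the identity by direct term-by-term differentiation: for each fixed $(x,t)$ the map $H_{p_m}$ is a polynomial in $u_1,\dots,u_m$ and the sum is finite, so $\partial_{u_i}$ may be carried inside. Fix $i\in\{2,\dots,m-1\}$. In the general summand of $H_{p_m}$ the only factor involving $u_i$ is $u_i^{p_i-p_{i-1}}$, so differentiating it produces the numerical factor $p_i-p_{i-1}$ and lowers that exponent to $p_i-p_{i-1}-1$; in particular every summand with $p_i=p_{i-1}$ drops out. Hence
\[
\partial_{u_i}H_{p_m}=\sum_{p_{m-1}=0}^{p_m}\cdots\sum_{p_1=0}^{p_2}(p_i-p_{i-1})\,C_{p_m}^{p_{m-1}}\cdots C_{p_2}^{p_1}\,\theta_1^{p_1^2}\cdots\theta_{m-1}^{p_{m-1}^2}\,u_1^{p_1}\cdots u_i^{p_i-p_{i-1}-1}\cdots u_m^{p_m-p_{m-1}},
\]
and the task is to bring the right-hand side into the asserted form.

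I would then shift the summation indices. Set $q_j=p_j$ for $1\le j\le i-1$, $q_j=p_j-1$ for $i\le j\le m-1$, and $q_m=p_m-1$. On the surviving summands one has $p_i>p_{i-1}\ge 0$, hence $p_j\ge 1$ for all $j\ge i$, so every $q_j$ is a nonnegative integer, and $(p_1,\dots,p_{m-1})\mapsto(q_1,\dots,q_{m-1})$ is a bijection from the index set of the differentiated sum onto $\{\,0\le q_1\le\cdots\le q_{m-1}\le p_m-1\,\}$. Under this shift the exponents of $u_1,\dots,u_{i-1}$ and of $u_{i+1},\dots,u_{m-1}$ are unchanged, $u_i^{p_i-p_{i-1}-1}=u_i^{q_i-q_{i-1}}$, and $u_m^{p_m-p_{m-1}}=u_m^{(p_m-1)-q_{m-1}}$; likewise $\theta_j^{p_j^2}=\theta_j^{q_j^2}$ for $j\le i-1$ while $\theta_j^{p_j^2}=\theta_j^{(q_j+1)^2}$ for $i\le j\le m-1$, which is exactly the power of $\theta_j$ appearing in the statement once $q_j$ is relabelled as $p_j$.

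The one substantive step is the bookkeeping of the binomial coefficients together with the surviving factor $p_i-p_{i-1}$. Using the elementary identities $(k-j)C_k^j=k\,C_{k-1}^j$ and $k\,C_n^k=n\,C_{n-1}^{k-1}$, one telescopes $p_i-p_{i-1}$ up the chain $C_{p_i}^{p_{i-1}},C_{p_{i+1}}^{p_i},\dots,C_{p_m}^{p_{m-1}}$: first $(p_i-p_{i-1})C_{p_i}^{p_{i-1}}=p_i\,C_{p_i-1}^{p_{i-1}}$, then $p_i\,C_{p_{i+1}}^{p_i}=p_{i+1}\,C_{p_{i+1}-1}^{p_i-1}$, and so on, the leftover factor climbing rung by rung until $p_{m-1}\,C_{p_m}^{p_{m-1}}=p_m\,C_{p_m-1}^{p_{m-1}-1}$; each $C_{p_{j+1}}^{p_j}$ of the chain thereby becomes $C_{q_{j+1}}^{q_j}$, while the remaining coefficients $C_{p_{j+1}}^{p_j}$ with $1\le j\le i-2$ are untouched and already equal $C_{q_{j+1}}^{q_j}$. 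Collecting everything, the differentiated sum equals $p_m$ times
\[
\sum_{q_{m-1}=0}^{p_m-1}\cdots\sum_{q_1=0}^{q_2}C_{p_m-1}^{q_{m-1}}C_{q_{m-1}}^{q_{m-2}}\cdots C_{q_2}^{q_1}\,\theta_1^{q_1^2}\cdots\theta_{i-1}^{q_{i-1}^2}\theta_i^{(q_i+1)^2}\cdots\theta_{m-1}^{(q_{m-1}+1)^2}\,u_1^{q_1}\cdots u_m^{(p_m-1)-q_{m-1}},
\]
which is precisely the right-hand side of the lemma after renaming $q_j$ as $p_j$. The main obstacle is entirely combinatorial: one must track the shift of all $m-1$ summation indices, confirm that each nested summation range transforms correctly (in particular that the new outer index runs up to $p_m-1$), and verify that the telescoping of the factor $p_i-p_{i-1}$ reproduces exactly the coefficients $C_{p_m-1}^{p_{m-1}}\cdots C_{p_2}^{p_1}$ of the statement; no analytic ingredient beyond the polynomiality of $H_{p_m}$ is needed.
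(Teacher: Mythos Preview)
Your proof is correct. The paper itself does not prove this lemma at all: it simply states the result and attributes it to Abdelmalek and Kouachi \cite{RefWorks:8}, so there is no in-paper argument to compare against. Your direct combinatorial proof---differentiate term by term, shift the indices $p_j\mapsto p_j-1$ for $j\ge i$, and telescope the factor $p_i-p_{i-1}$ up the binomial chain via $(k-j)C_k^j=kC_{k-1}^j$ and $kC_n^k=nC_{n-1}^{k-1}$---is exactly the natural way to verify the identity and supplies what the paper omits. The index-range bookkeeping you describe (in particular that surviving terms have $p_i\ge p_{i-1}+1$, so the shifted indices remain nonnegative and nested, with the new outer bound $p_m-1$) is handled correctly.
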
\\

We first establish an $L_1$ estimate for solutions to (\ref{primary}).\\

\begin{lemma}
Suppose that $(V_N)$, $(V_F)$, $(V_{QP})$ and $(V_{L1})$ are satisfied, and $u$ is the unique, componentwise nonnegative, maximal solution to (\ref{primary}). Then for all $0<t<T_{max}$, \[ \Vert u(\cdot, t)\Vert_{1,\Omega}\leq \alpha(t)\]for some nondecreasing continuous function $\alpha$ dependent on $L_1$ and $b_1,...,b_m$ in $(V_{L1})$. In addition, if $L_1=0$ then $\Vert u(\cdot, t)\Vert_{1,\Omega}$ is bounded independent of $t\geq 0$.
\end{lemma}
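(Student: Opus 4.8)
The plan is to derive a single scalar differential inequality for a weighted total mass and close it with Gronwall's inequality. Introduce a weight $\psi\in C^{2}(\overline\Omega)$ with $\psi>0$ on $\overline\Omega$, $\psi\equiv 1$ on $M$, and $\partial_\eta\psi\equiv\lambda$ on $M$ for a constant $\lambda>0$ to be fixed, and set
\[
\tilde Z(t)=\int_\Omega\sum_{j=1}^{m}b_j\,\psi(x)\,u_j(x,t)\,dx .
\]
Differentiating, substituting $\partial_t u_j=d_j\Delta u_j+F_j(u)$, integrating by parts twice and inserting the boundary condition $d_j\partial_\eta u_j=G_j(u)$ (legitimate by the parabolic regularity of $u$ — e.g.\ via Lemma \ref{neu} applied to the linear problems satisfied by each $u_j$ — or by exhausting $\Omega$ by interior subdomains), I obtain, using $\psi=1$ and $\partial_\eta\psi=\lambda$ on $M$,
\[
\tilde Z'(t)=\int_M\sum_j b_j G_j(u)\,d\sigma-\lambda\int_M\sum_j b_j d_j\,u_j\,d\sigma+\int_\Omega\Delta\psi\sum_j b_j d_j\,u_j\,dx+\int_\Omega\psi\sum_j b_j F_j(u)\,dx .
\]

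Next I exploit $(V_{L1})$ on both pieces. Writing $\Sigma=\sum_j u_j\ge 0$, $(V_{L1})$ gives $\sum_j b_j G_j(u)\le L_1(\Sigma+1)$ on $M$ and $\sum_j b_j F_j(u)\le L_1(\Sigma+1)$ on $\Omega$, and since $\psi\ge 0$ the last term is $\le L_1\|\psi\|_{\infty,\Omega}\int_\Omega(\Sigma+1)\,dx$. For the two surface terms, using $u_j\ge 0$,
\[
\int_M\sum_j b_j G_j(u)\,d\sigma-\lambda\int_M\sum_j b_j d_j u_j\,d\sigma\le L_1\!\int_M(\Sigma+1)\,d\sigma-\lambda\Big(\min_j b_j d_j\Big)\!\int_M\Sigma\,d\sigma ,
\]
so choosing $\lambda\ge L_1/\min_j(b_j d_j)$ makes the offending surface integral $\int_M\Sigma\,d\sigma$ disappear, leaving only $L_1|M|$. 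Such a $\psi$ exists: with $\rho$ the distance to $M$ (which is $C^2$ near $M$ since $M\in C^{2+\sigma}$) and $\chi$ a smooth cutoff equal to $1$ in a tubular neighbourhood of $M$, take $\psi=e^{-\lambda\chi\rho}$; then $\psi\in C^2(\overline\Omega)$, $\psi\equiv 1$ and $\partial_\eta\psi\equiv\lambda$ on $M$ (since $\partial_\eta\rho=-1$ there), and $\psi$ is bounded below by a positive constant. When $L_1=0$ I simply take $\psi\equiv 1$.

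Assembling the estimates and using $\int_\Omega\Sigma\,dx\le(\min_j b_j\cdot\min_{\overline\Omega}\psi)^{-1}\tilde Z(t)$, I reach $\tilde Z'(t)\le A\,\tilde Z(t)+B$ with $A,B\ge 0$ depending only on $L_1$, $b_1,\dots,b_m$, $d_1,\dots,d_m$ and $\Omega$; crucially $A=B=0$ when $L_1=0$ (there $\psi\equiv 1$, so $\Delta\psi\equiv 0$ and both surviving constants vanish). Gronwall gives $\tilde Z(t)\le e^{At}\big(\tilde Z(0)+B/A\big)$ (read as $\tilde Z(0)$ when $A=B=0$), and since $\|u(\cdot,t)\|_{1,\Omega}\le\sum_j\|u_j(\cdot,t)\|_{1,\Omega}\le(\min_j b_j\cdot\min_{\overline\Omega}\psi)^{-1}\tilde Z(t)$, the right-hand side, viewed as a function of $t$, is the desired nondecreasing continuous $\alpha$; when $L_1=0$ it collapses to the constant $(\min_j b_j)^{-1}\tilde Z(0)$, i.e.\ a bound independent of $t$. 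The only genuinely delicate point is the surface integral $\int_M\Sigma\,d\sigma$ produced by the boundary reaction $G$: it is not controlled by $\|u(\cdot,t)\|_{1,\Omega}$, so the naive unweighted mass functional does not close. The argument hinges on choosing $\psi$ so that the diffusive flux through $M$ contributes a damping term $-\lambda\sum_j b_j d_j\int_M u_j\,d\sigma$, and on the observation that the ratio $\partial_\eta\psi/\psi|_M=\lambda$ can be made as large as needed while $\psi$ stays bounded away from $0$ (keeping the interior $L_1$ norm comparable to $\tilde Z$); everything else is bookkeeping.
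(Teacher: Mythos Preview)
Your argument is correct and takes a genuinely different route from the paper's. The paper first writes the unweighted inequality
\[
\frac{d}{dt}\int_\Omega\sum_j u_j\le L_1\int_\Omega\Big(\sum_j u_j+1\Big)+L_1\int_M\Big(\sum_j u_j+1\Big),
\]
observes this already settles the case $L_1=0$, and for $L_1>0$ controls the dangerous term $\int_0^T\!\int_M\sum_j u_j$ by a \emph{duality} argument: it solves a backward linear heat equation $\varphi_t+d\Delta\varphi+L_1\varphi=0$ with the boundary condition $d\,\partial_\eta\varphi=L_1\varphi+1$ and a strictly positive terminal datum, multiplies each $u_j$-equation by $\varphi$, sums, and integrates over $\Omega\times(0,T)$. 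After two integrations by parts and use of $(V_{L1})$ this produces an estimate of the form $\delta\!\int_M\sum_j u_j(\cdot,T)+\int_0^T\!\int_M\sum_j u_j\le C_1+C_2\int_0^T\!\int_\Omega\sum_j u_j$, which is then fed back into the unweighted inequality and closed with Gronwall.

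Your approach replaces this auxiliary PDE by a \emph{static} weight $\psi\in C^2(\overline\Omega)$ with $\psi|_M=1$ and $\partial_\eta\psi|_M=\lambda$ chosen so large that the damping $-\lambda\sum_j b_jd_j\int_M u_j$ coming from the second Green identity absorbs the $L_1\int_M\Sigma$ produced by $(V_{L1})$. This yields a closed scalar inequality $\tilde Z'\le A\tilde Z+B$ in one stroke, with no time integration or auxiliary solution needed; the construction $\psi=e^{-\lambda\chi\rho}$ is exactly what is required. The trade-off: your method is shorter and more elementary, while the paper's duality argument additionally delivers the space--time bound $\int_0^T\!\int_M\sum_j u_j\,d\sigma\,dt$ and the pointwise-in-time trace bound $\int_M\sum_j u_j(\cdot,T)\,d\sigma$, which can be convenient downstream. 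Both give the same $L_1(\Omega)$ conclusion, and both reduce to the trivial computation when $L_1=0$.
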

\begin{proof}
WLOG assume $b_i=1$ for all $i=1,...,m$. Integrating the $u_j$ equation over $\Omega$, we get
\begin{align}\label{interior}
\frac{d}{dt}\int_{\Omega} \sum_{j=1}^mu_j&=\sum_{j=1}^m\int_{\Omega} d_j\Delta u_j+\int_{\Omega}\sum_{j=1}^mF_j(u)\nonumber\\
 & \leq \int_{\Omega} \sum_{j=1}^mF_j(u)+ \int_M \sum_{j=1}^mG_j(u)\nonumber\\
 &\leq \int_{\Omega}L_1\left(\sum_{j=1}^m{u_j+1}\right)+\int_M L_1\left(\sum_{j=1}^m{u_j+1}\right).
\end{align}
Note that if $L_1=0$, then (\ref{interior}) implies $\Vert u(\cdot, t)\Vert_{1,\Omega}$ is a priori bounded independent of $t\geq 0$. Now, suppose $0<T<T_{max}$, $L_1>0$, and let $d>0$. Consider the system 
\begin{align}
\varphi_t &=-d\Delta \varphi-L_1\varphi &  (x,t)\in\Omega\times (0, T) \nonumber\\
d\frac{\partial\varphi}{\partial\eta}&=L_1\varphi+1& (x,t)\in M\times (0, T) \nonumber\\
\varphi&=\varphi_{T}& x\in\Omega, \ t=T,
\end{align}
where $\varphi_{T}\in C^{2+\gamma}(\overline\Omega)$ for some $\gamma>0$, is strictly positive and satisfies the compatibility condition
\[ d \frac{\partial\varphi_{T}}{\partial\eta}=L_1\ \text {on} \ M\times \lbrace T\rbrace.\]
From Theorem 5.3 in chapter 4 of $\cite{RefWorks:65}$, $\varphi\in C^{2+\gamma, 1+\frac{\gamma}{2}}(\overline\Omega\times [0,T])$, and therefore $\varphi\in C^{2+\gamma, 1+\frac{\gamma}{2}}(M\times [0,T])$ . Also, similar to our comments in the previous section, $\varphi\ge 0$.
Now, consider
\begin{align}
0&=\int_{0}^{T}\int_{\Omega} u_j(-\varphi_t-d\Delta\varphi-L_1\varphi)\nonumber\\
&= \int_{0}^{T}\int_{\Omega} \varphi (u_{jt}-d_j \Delta u_j)-L_1\int_0^T\int_\Omega u_j\varphi-\int_{0}^{T}\int_M  u_j d\frac{\partial\varphi}{\partial\eta}+(d_j-d)\int_0^T\int_\Omega u_i\Delta\varphi\nonumber\\
&+\int_{0}^{T}\int_M \varphi d_j\frac{\partial u_j}{\partial\eta}+\int_{\Omega}  u_j(x,0)\varphi(x,0)-\int_{\Omega} u_j(x,T)\varphi(\cdot, T)\nonumber\\
&=\int_0^T \int_\Omega\varphi F_j(u)-L_1\int_0^T\int_\Omega u_j\varphi-\int_{0}^{T}\int_M  u_j(L_1\varphi+1)+(d_j-d)\int_0^T\int_\Omega u_i\Delta\varphi\nonumber\\
&+\int_{0}^{T}\int_M \varphi G_j(u)+\int_{\Omega}  u_j(x,0)\varphi(x,0)-\int_{\Omega} u_j(x,T)\varphi(\cdot, T).
\end{align}
Summing these equations, and and making use of $(V_{L1})$, gives
\begin{align}\label{eqn2}
\int_0^T\int_M \sum_{j=1}^m u_j\le \int_0^T\int_\Omega L_1\varphi&+\int_0^T\int_M L_1\varphi+\sum_{j=1}^m(d_j-d)\int_0^T\int_\Omega u_i\Delta\varphi\\
&+\int_{\Omega} \sum_{j=1}^m w_j(x)\varphi(x,0)-\int_M \sum_{j=1}^m u_j(x,T)\varphi_T(x).\nonumber
\end{align}
Now, recall that $\varphi_T$ is strictly positive. Let $0<\delta\le\varphi(x)$ for all $x\in\Omega$. Then (\ref{eqn2}) implies
\begin{align}\label{eqn2.1}
\delta\int_M \sum_{j=1}^m u_j(x,T)+\int_0^T\int_M \sum_{j=1}^m u_j\le \int_0^T\int_\Omega L_1\varphi&+\int_0^T\int_M L_1\varphi+\sum_{j=1}^m(d_j-d)\int_0^T\int_\Omega u_i\Delta\varphi\nonumber\\
&+\int_{\Omega} \sum_{j=1}^m w_j(x)\varphi(x,0).
\end{align}
Then, there exist constants $C_1,C_2>0$, depending on $L_1$, $d$, $\varphi_T$, $w_1,...,w_m$, $d_1,...,d_m$, and at most exponentially on $T$, such that 
 \begin{align}\label{eqn2.2}
\delta\int_M \sum_{j=1}^m u_j(x,T)+\int_0^T\int_M \sum_{j=1}^m u_j\le C_1+C_2\int_0^T\int_\Omega\sum_{j=1}^mu_j.
\end{align}
Now, return to (\ref{interior}), and integrate both sides in $t$ to obtain 
\begin{align}\label{eqn2.3}
\int_\Omega\sum_{j=1}^m u_j(x,t)dx\le L_1\left(\int_0^t\int_\Omega \sum_{j=1}^m u_j+\int_0^t\int_M\sum_{j=1}^m u_j+t|M|+t|\Omega|\right)+\int_\Omega\sum_{j=1}^mw_j(x).
\end{align}
The second term on the right hand side of (\ref{eqn2.3}) can be bounded above by $L_1$ times the right hand side of (\ref{eqn2.2}). Using this estimate, and Gronwall's inequality, we can obtain a bound for $\int_0^T\int_\Omega\sum_{j=1}^mu_j$ that depends on $T$. Placing this on the right hand side of (\ref{eqn2.2}) gives a bound for $\int_M \sum_{j=1}^m u_j(x,T)$ that depends on $T$. Applying this to the second integral on the right hand side of (\ref{interior}), and using Gronwall's inequality, gives the result.
\end{proof}
\begin{lemma} \label{lpestimate}
Suppose that $(V_N)$, $(V_F)$, $(V_{QP})$ and $(V_{L})$ are satisfied, and $u$ is the unique, componentwise nonnegative, maximal solution to (\ref{primary}).  If $1<p<\infty$ and $T=T_{max}<\infty$, then $\|u\|_{p,\Omega_T}$ and $\|u\|_{p,M_T}$ are bounded.
\end{lemma}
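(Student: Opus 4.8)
\textbf{Proof proposal for Lemma \ref{lpestimate}.}

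The plan is to construct, for each fixed integer $p \ge 2$, the homogeneous polynomial functional $H_p(u(\cdot,t))$ from Lemma \ref{diff} (with $p_m = p$ and the weights $\theta_1,\dots,\theta_{m-1}$ chosen as in the preamble so that the associated quadratic forms are positive definite), integrate it over $\Omega$, and differentiate in time. As in the $m=2$ case carried out in Section 4, differentiating $L(t) = \int_\Omega H_p(u)\,dx$ and substituting the PDE produces three kinds of terms: (i) a reaction contribution $\int_\Omega \sum_i \partial_{u_i} H_p \cdot F_i(u)\,dx$, (ii) a diffusion contribution that, after integration by parts, splits into a negative-definite gradient term $-\int_\Omega (\text{quadratic form in } \nabla u_i)$ plus a boundary term, and (iii) the boundary reaction term $\int_M \sum_i \partial_{u_i} H_p \cdot G_i(u)\,d\sigma$. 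The weight structure of $H_p$ is designed precisely so that the coefficient of each monomial $u^{(p_1,\dots)}$ in $\sum_i \partial_{u_i} H_p$, when combined with the ratios $\theta_i^{2p_i+1}$ etc., reproduces a quantity of the form $\sum_j a_j F_j$ (resp. $\sum_j a_j G_j$) with the multipliers $a_j \ge K$, so that $(V_{\mathrm{L}})$ applies to bound both (i) and (iii) linearly in $\sum_j u_j + 1$.

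First I would set up the induction on time exactly as in Section 4: by Theorem \ref{local} it suffices to assume $T = T_{\max} < \infty$ and derive a finite bound on $\|u\|_{p,\Omega_T}$ and $\|u\|_{p,M_T}$, then bootstrap via $(V_{\mathrm{Poly}})$ and Lemma \ref{holder} to contradict blow-up --- but here we only need the $L_p$ bounds, which is the content of the lemma. I would proceed by induction on $p$ (or rather on the ``level'' $m$ of nested sums in $H_p$), using the $L_1$ bound from the previous lemma as the base case. For the inductive step, after discarding the negative gradient terms, $(V_{\mathrm{L}})$ converts (i) into $\int_\Omega L_a(\sum_j u_j + 1)\cdot(\text{lower-degree monomials})\,dx$, which by Young's inequality and the inductive hypothesis (lower powers of $u$ in $\Omega_T$ are already controlled) is bounded; similarly (iii) becomes $\int_M L_a(\sum_j u_j+1)\cdot(\text{degree }p-1\text{ monomials})\,d\sigma \le N(\int_M \sum_j u_j^p\,d\sigma + 1)$. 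The crucial maneuver is then to absorb this boundary term: apply Lemma \ref{omega} (inequality (\ref{secondone})) to each $\|u_j^{p/2}\|_{2,M}^2$, trading it for $\epsilon\|\nabla u_j^{p/2}\|_{2,\Omega}^2$ --- absorbed into the gradient term retained from (ii) --- plus $M_{\epsilon,\gamma}\|u_j\|_{1,\Omega}^p$, which is controlled by the $L_1$ estimate. Integrating the resulting differential inequality $L'(t) \le (\text{integrable in }t)$ over $[0,T]$ gives $\|u\|_{p,M_T}$ bounded, and since $H_p$ dominates $\sum_j u_j^p$, also $\|u\|_{p,\Omega_T}$ bounded.

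The main obstacle is purely bookkeeping: verifying that the weighted coefficients appearing in $\sum_i \partial_{u_i} H_p$ do line up, monomial by monomial, with an admissible multiplier vector $(a_1,\dots,a_{m-1},1)$ having all $a_j \ge K$, so that $(V_{\mathrm{L}})$ genuinely applies to both the interior and the boundary reaction terms --- this is exactly the combinatorial identity behind Lemma \ref{diff} and the definitions of $K_l^r, H_l^r$, and requires choosing the $\theta_i$ large enough (depending on $p$ and $K$) both to make the diffusion quadratic forms positive definite and to force every multiplier above the threshold $K$. A secondary technical point is that the determinant conditions $K_l^l > 0$ guarantee positive definiteness of the full $m \times m$ quadratic form in the $\nabla u_i$, so that after using Lemma \ref{omega} with $\epsilon$ small we retain a strictly positive multiple of $\sum_j \|\nabla u_j^{p/2}\|_{2,\Omega}^2$ (or at worst a nonnegative quantity) and nothing blows up. Everything else --- Young's inequality, Gronwall, integrating in $t$ --- is routine and parallels Section 4.
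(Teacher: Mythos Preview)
Your proposal is correct and follows essentially the same approach as the paper: build $L(t)=\int_\Omega H_{p_m}(u)\,dx$, differentiate, split via Green's formula into a negative-definite gradient quadratic form plus interior and boundary reaction terms, use $(V_{\mathrm L})$ on each (the combinatorics of Lemma \ref{diff} guaranteeing the multipliers exceed $K$), and then absorb via Lemma \ref{omega}. Two minor organizational differences are worth flagging. First, the paper does not induct on $p$: the interior reaction term $\int_\Omega (\sum_j u_j+1)(\text{degree }p-1)$ is bounded directly by $C\int_\Omega(1+\sum_j u_j^{p_m})$ via Young, and this full-degree term is then absorbed using Lemma \ref{omega} (inequality (\ref{firstone})) against the gradient plus the $L_1$ bound alone --- no intermediate $L_q$ estimates are invoked. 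Second, the order in which you extract the two norms is reversed: the paper first obtains the differential inequality $L'(t)\le M_3-\alpha_{p_m}L(t)$, which gives a uniform-in-$t$ bound on $\|u(\cdot,t)\|_{p_m,\Omega}$; only then does it return to the original inequality, keep a copy of $\int_M \sum_j u_j^{p_m}$ on the left, and integrate over $(0,T)$ to get $\|u\|_{p_m,M_T}$. Your outline absorbs the boundary term entirely and then asserts an $M_T$ bound, which does not follow from the absorbed inequality alone --- you need that second pass.
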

\begin{proof}
Note that $(V_L)$ implies $(V_{L1})$, and consequently, we can make use of our previous lemma. Consider the functional \[ L(t)=\int_{\Omega}H_{p_m}(u(x,t)) dx\] where $H_{p_m}(u(x,t))$ is given in Lemma $\ref{diff}$ with $p_m\geq 2$ is a positive integer. It is simple matter to prove that there are constant $\alpha_{p_m}$, $\beta_{p_m}>0$ depending on the $\theta_i$ so that 
\[ \alpha_{p_m}\left(\sum_{j=1}^{m}z_j\right)^{p_m}\leq H_{p_m}(z)\leq  \beta_{p_m}\left(\sum_{j=1}^{m}z_j\right)^{p_m}\]
for all $z\in \mathbb{R}^m_{+}.$
Now differentiating $L$ with respect to $t$ yields
\begin{align*}
L'(t) &=\int_{\Omega}\partial_t H_{p_m}(u)dx \nonumber\\
&= \int_{\Omega}\sum_{i=1}^m\partial_{u_i}H_{p_m}(u)\frac{\partial u_i}{\partial t} dx\nonumber\\
&=  \int_{\Omega}\sum_{i=1}^m\partial_{u_i}H_{p_m}(u)(d_i\Delta u_i+F_i) dx\nonumber\\
&=\int_{\Omega}\sum_{i=1}^m\partial_{u_i}H_{p_m}(u)d_i\Delta u_i dx+ \int_{\Omega}\sum_{i=1}^m\partial_{u_i}H_{p_m}(u)F_i (u) dx\nonumber\\
\end{align*}
Using Green's formula, we get 
\begin{align*}
L'(t)&=\int_{\Omega}\sum_{i=1}^m\partial_{u_i}H_{p_m}(u)d_i\Delta u_i dx+ \int_{\Omega}\sum_{i=1}^m\partial_{u_i}H_{p_m}(u) F_i (u) dx\nonumber\\
&= \int_{M}\sum_{i=1}^{m}\partial_{u_i} d_i H_{p_m}(u)\partial_{\eta}u_i ds-\int_{\Omega}\left[\left(\left(\frac{d_i+d_j}{2}\partial_{u_j u_i}H_{p_m}(u)\right)_{1\leq i,j\leq m}\right)V\right] \cdot V dx \\ \nonumber &\quad + \int_{\Omega}\sum_{i=1}^m\partial_{u_i}H_{p_m}(u)F_i(u)  dx,\nonumber
\end{align*}
for $p_1=0,...,p_2,p_2=0,...,p_3,...,p_{m-1}=0,...,p_m-2$ and $V=(\nabla u_1,\nabla u_2,...,\nabla u_m)^t$. So,
\begin{align}
L'(t) &+\int_{\Omega}\left[\left(\left(\frac{d_i+d_j}{2}\partial_{u_j u_i}H_{p_m}(u)\right)_{1\leq i,j\leq m}\right)V\right] \cdot V dx \\ \nonumber &= \int_{M}\sum_{i=1}^{m}\partial_{u_i} d_i H_{p_m}(u)\partial_{\eta}u_i ds+ \int_{\Omega}\sum_{i=1}^m\partial_{u_i}H_{p_m}(u)F_i(u)  dx\nonumber\\
&=\int_{M}\sum_{i=1}^{m} \partial_{u_i}H_{p_m}(u)G_i(u) ds+ \int_{\Omega}\sum_{i=1}^m\partial_{u_i}H_{p_m}(u)F_i(u)  dx\nonumber
\end{align}
From Lemma $\ref{diff}$, we know \[ \partial_{u_i}H_{p_m}(u)= p_m \sum_{p_{m-1}=0}^{p_{m}-1}\cdot\cdot\cdot \sum_{p_1=0}^{p_2}C_{p_{m}-1}^{p_{m-1}}\cdot\cdot\cdot C_{p_2}^{p_1}\theta_1^{{p_1}^2}\cdot\cdot\cdot \theta_{i-1}^{p^2_{(i-1)}}\theta_i^{(p_{i}+1)^2} \cdot\cdot\cdot \theta_{(m-1)}^{(p_{(m-1)}+1)^2} \times u_1^{p_1}u_2^{p_2-p_1}\cdot\cdot\cdot u_m^{(p_m-1)-p_{m-1}}  \]
As a result, \begin{align}
\int_{\Omega}\sum_{i=1}^m \partial_{u_i}H_{p_m} &(u) F_i(u)  dx \nonumber \\ \nonumber &=\int_{\Omega} \left[  p_m \sum_{p_{m-1}=0}^{p_{m-1}}\cdot\cdot\cdot \sum_{p_1=0}^{p_2}C_{p_{m}-1}^{p_{m-1}}\cdot\cdot\cdot C_{p_2}^{p_1} u_1^{p_1} u_2^{p_2-p_1}\cdot\cdot\cdot u_m^{p_m-1-p_{m-1}}\right]\nonumber\\ &\times\left(\prod_{i=1}^{m-1}\theta_i^{(p_i+1)^2}F_1(u)+\sum_{j=2}^{m-1}\prod_{k=1}^{j-1}\theta_k^{{p_k}^{2m-1}}\prod _{i=j}\theta_i^{(p_i+1)^2}F_j(u)+\prod_{i=1}^{m-1}\theta_i^{{p_i}^2} F_m(u)\right) dx\nonumber\\
&=\int_{\Omega} \left[  p_m \sum_{p_{m-1}=0}^{p_{m-1}}\cdot\cdot\cdot \sum_{p_1=0}^{p_2}C_{p_{m}-1}^{p_{m-1}}\cdot\cdot\cdot C_{p_2}^{p_1} u_1^{p_1} u_2^{p_2-p_1}\cdot\cdot\cdot u_m^{p_m-1-p_{m-1}}\right]\nonumber\\ &\times\left(\frac{\prod_{i=1}^{m-1}\theta_i^{(p_i+1)^2}}{\prod_{i=1}^{m-1}\theta_i^{{p_i}^2}}F_1(u)+\sum_{j=2}^{m-1}\frac{\prod_{k=1}^{j-1}\theta_k^{{p_k}^{2m-1}}\prod _{i=j}\theta_i^{(p_i+1)^2}}{\prod_{i=1}^{m-1}\theta_i^{{p_i}^2}}F_j(u)+F_m(u)\right)\prod_{i=1}^{m-1}\theta_i^{{p_i}^2} dx \nonumber\\
&=\int_{\Omega} \left[  p_m \sum_{p_{m-1}=0}^{p_{m-1}}\cdot\cdot\cdot \sum_{p_1=0}^{p_2}C_{p_{m}-1}^{p_{m-1}}\cdot\cdot\cdot C_{p_2}^{p_1} u_1^{p_1} u_2^{p_2-p_1}\cdot\cdot\cdot u_m^{p_m-1-p_{m-1}}\right]\nonumber\\ &\times\left(\prod_{i=1}^{m-1}\frac{\theta_i^{(p_i+1)^2}}{\theta_i^{{p_i}^2}}F_1(u)+\sum_{j=2}^{m-1} \prod_{i=j}^{m-1}\frac{\theta_i^{(p_i+1)^2}}{\theta_i^{{p_i}^2}}F_j(u)+F_m(u)\right)\prod_{i=1}^{m-1}\theta_i^{{p_i}^2} dx. \end{align}
Therefore,
\begin{align}
&\int_{\Omega}\sum_{i=1}^m\partial_{u_i}H_{p_m}(u)F_i(u)  dx 
\nonumber \\ \nonumber &\leq \hat C \int_{\Omega} \left[  p_m \sum_{p_{m-1}=0}^{p_{m-1}}\cdot\cdot\cdot \sum_{p_1=0}^{p_2}C_{p_{m}-1}^{p_{m-1}}\cdot\cdot\cdot C_{p_2}^{p_1} u_1^{p_1} u_2^{p_2-p_1}\cdot\cdot\cdot u_m^{p_m-1-p_{m-1}} \left (1+\sum_{i=1}^{m} u_i\right)\right]\nonumber\\
&\leq \hat L\int_{\Omega}\left(\sum_{i=1}^m u_i\right)^{p_m-1} \times  \left (1+\sum_{i=1}^{m} u_i\right)= \hat L\int_{\Omega} \left(1+\sum_{i=1}^m u_i\right)^{p_m} \nonumber\\&\leq L_{p_m} \int_{\Omega} \left(1+\sum_{i=1}^m u_i^{p_m}\right).
\end{align}
A similar calculation for $G_i(u)$ implies that for an appropriate choice of $c_{p_m}$ and $L_{p_m}>0$ we get
 \begin{align}\label{equation1}
L'(t)+c_{p_m}\int_{\Omega}\sum_{j=1}^{m}\vert \nabla u_j^{\frac{p_m}{2}}\vert^2 dx \leq L_{p_m}\left(\int_M\sum_{j=1}^m u_j^{p_m} d\sigma +\int_{\Omega}\sum_{j=1}^m u_j^{p_m} dx  +1\right).
\end{align}
From equation (2.25) on page 49 of $\cite{RefWorks:69}$, there exists constants $c_{p_m}$ and $M_1>0$ such that 
\begin{align}\label{equation2}
\Vert u\Vert_{L_{p_m} (M)}\leq \frac{c_{p_m}}{2}\Vert\nabla u\Vert_{L_2(\Omega)}+M_1\Vert u\Vert_{L_2(\Omega)}.
\end{align}
Now replacing $u$ by $u_j^{\frac{p_m}{2}}$, we get 
\begin{align}\label{equation3}
L_{p_m}\int_M \sum_{j=1}^m u_j^{p_m} d\sigma \leq \frac{c_{p_m}}{2}\int_{\Omega}\sum_{j=1}^{m}\vert \nabla u_j^{\frac{p_m}{2}}\vert dx+M_1\int_{\Omega}\sum_{j=1}^m u_j^{p_m} dx.
\end{align}
As a result, combining this with $(\ref{equation1})$, we have 
\begin{align}\label{equation4}
L'(t)+\frac{c_{p_m}}{2}\int_{\Omega}\sum_{j=1}^{m}\vert \nabla u_j^{\frac{p_m}{2}}\vert^2 dx \leq (L_{p_m}+M_1)\left(\int_{\Omega}\sum_{j=1}^m u_j^{p_m} dx \right)+L_{p_m}.
\end{align}
Now, we make use of to Lemma $\ref{omega}$ to conclude there is a constant $M_2>0$ such that 
\begin{align}\label{equation5}
(L_{p_m}+M_1+1)\int_{\Omega}\sum_{j=1}^m u_j^{p_m} dx\leq \frac{c_{p_m}}{2}\int_{\Omega}\sum_{j=1}^m \vert \nabla u_j^{\frac{p_m}{2}}\vert^2 dx +M_2\sum_{j=1}^m\left(\int_{\Omega} u_j dx\right)^{p_m}.
\end{align}
Combining  $(\ref{equation5})$ with $(\ref{equation4})$  and our $L_1$ estimates gives the existence of $M_3>0$ such that 
\begin{align}
L'(t) &=L_{p_m}+M_2\sum_{j=1}^m\left(\int_{\Omega} u_j dx\right)^{p_m}-\int_{\Omega}\sum_{j=1}^m u_j^{p_m} dx \nonumber\\
&\leq M_3-\alpha_p L(t)
\end{align}
for all $t\geq 0$. Consequently,
\[ L(t)\leq L(0) \exp (-\alpha_{p_m} t)+\frac{M_3}{\alpha_{p_m}} \]
for all $t\geq 0$. This inequality gives uniform $L_{p_m}(\Omega)$ estimates on $u$ for each $p_m>1$. Now return to (\ref{equation1}). This time, use the fact that there is a constant $M_4>0$ so that
\begin{align}\label{SD}
(L_{p_m}+1)\int_M\sum_{j=1}^m u_j^{p_m} d\sigma \leq c_{p_m}\int_{\Omega}\sum_{j=1}^m \vert\nabla u_j^{p_m/2}\vert^2 dx +M_4\int_{\Omega}\sum_{j=1}^m u_j^{p_m} dx
\end{align}
to obtain
\begin{align}\label{Manifold}
L'(t)+\int_M \sum_{j=1}^m u_j^{p_m} d\sigma \leq L_{p_m}+(L_{p_m}+M_4)\int_{\Omega}\sum_{j=1}^m u_j^{p_m} dx.
\end{align}
Integrating both the sides over the time interval $(0,T_{\max})$, and using the bounds derived above, gives an $L_{p_m}(M\times (0,T_{\max}))$ estimate on $u$ for each $p_m>1$.
\end{proof}

\hspace{1cm}\\
{\bf Proof of Theorem} \ref{global}: From Theorem $\ref{local}$, we already have a componentwise nonnegative, unique, maximal solution of $\ref{primary}$. If $T_{\max} = \infty$, then we are done. So, by way of contradiction assume $T_{\max} <\infty$. From Lemma $\ref{lpestimate}$ , we have $L_p$ estimates for our solution for all $p\geq 1$ on $\Omega\times (0,T_{\max})$ and $M\times (0,T_{\max})$. We know from $(V_{\text{Poly}})$  that the $F_i$ and $G_i$ are polynomially bounded above for each $i$. Then proceeding as in the proof of Theorem 3.3 in $\cite{RefWorks:1}$ with the bounds from Lemma \ref{lpestimate} we have $T_{\max}=\infty$.$\Box$\\

\begin{lemma} \label{lpuniformestimate}
Suppose that $(V_N)$, $(V_F)$, $(V_{QP})$ and $(V_{L})$ are satisfied, and $u$ is the unique, componentwise nonnegative, global solution to (\ref{primary}). If $\Vert u(\cdot,t)\Vert_{1, \Omega} $ is bounded independent of $t\geq 0$. Then $\|u\|_{p,\Omega\times(\tau,\tau+1)}$ and $\|u\|_{p,M\times(\tau,\tau+1)}$ are bounded, independent of $\tau\ge 0$, for each $p>1$.
\end{lemma}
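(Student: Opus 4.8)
The plan is to re-run the energy computation from the proof of Lemma~\ref{lpestimate}, keeping careful track of which constants depend on $t$. Fix an integer $p_m\ge 2$; the case of arbitrary $p>1$ then follows because $\Omega\times(\tau,\tau+1)$ and $M\times(\tau,\tau+1)$ have finite measure, so the $L_p$ norm over such a set is controlled by the $L_{p_m}$ norm as soon as $p_m\ge p$. Let $L(t)=\int_\Omega H_{p_m}(u(x,t))\,dx$ be the functional from Lemma~\ref{diff}, which is equivalent to $\int_\Omega(\sum_j u_j)^{p_m}\,dx$ via $\alpha_{p_m}(\sum_j z_j)^{p_m}\le H_{p_m}(z)\le\beta_{p_m}(\sum_j z_j)^{p_m}$. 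Green's formula, Lemma~\ref{diff}, positive-definiteness of the coefficient matrix, $(V_L)$, $(V_{\text{Poly}})$, and Lemma~\ref{omega} on $\Omega$ give, exactly as before, the pair of inequalities \eqref{equation4} and \eqref{equation5}; the constants $c_{p_m},L_{p_m},M_1,M_2,M_4$ appearing there depend only on $p_m$, $\Omega$, $M$, the $d_i$, and the structural data in $(V_L)$ and $(V_{\text{Poly}})$ — not on $t$.

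The key point is that the present hypothesis $\|u(\cdot,t)\|_{1,\Omega}\le C_1$ for all $t\ge 0$ makes the term $M_2\sum_j(\int_\Omega u_j\,dx)^{p_m}$ in \eqref{equation5} bounded by a constant independent of $t$. Feeding \eqref{equation5} into \eqref{equation4}, and using the elementary lower bound $\int_\Omega\sum_j u_j^{p_m}\,dx\ge m^{1-p_m}\int_\Omega(\sum_j u_j)^{p_m}\,dx\ge (m^{1-p_m}/\beta_{p_m})\,L(t)$ to absorb the dissipative term, we obtain $L'(t)\le M_3-\alpha_{p_m}'L(t)$ for all $t\ge 0$, with $M_3,\alpha_{p_m}'>0$ independent of $t$. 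Hence $L(t)\le L(0)e^{-\alpha_{p_m}'t}+M_3/\alpha_{p_m}'\le L(0)+M_3/\alpha_{p_m}'$, and $L(0)<\infty$ since $w\in C^2(\overline\Omega)$. This is a bound on $\|u(\cdot,t)\|_{p_m,\Omega}$ uniform in $t\ge 0$, and therefore a bound on $\|u\|_{p_m,\Omega\times(\tau,\tau+1)}$ uniform in $\tau\ge 0$.

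For the boundary estimate I would return to \eqref{Manifold} (obtained from \eqref{equation1} by absorbing the gradient term with the trace estimate \eqref{SD}), integrate it over $(\tau,\tau+1)$, and discard the nonnegative term $L(\tau+1)$:
\[
\int_\tau^{\tau+1}\!\!\int_M\sum_j u_j^{p_m}\,d\sigma\,dt\ \le\ L(\tau)+L_{p_m}+(L_{p_m}+M_4)\int_\tau^{\tau+1}\!\!\int_\Omega\sum_j u_j^{p_m}\,dx\,dt .
\]
The right-hand side is bounded independently of $\tau$: $L(\tau)$ by the uniform bound just established, and the last integral because its integrand has uniformly bounded $L_{p_m}(\Omega)$ norm in $t$. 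This yields the desired $L_{p_m}(M\times(\tau,\tau+1))$ estimate, completing the proof.

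I do not anticipate a genuine obstacle: the lemma essentially records that the only source of $t$-growth in Lemma~\ref{lpestimate} was the increasing bound $\alpha(t)$ on $\|u(\cdot,t)\|_{1,\Omega}$ from the earlier $L_1$ lemma, and that replacing it by a time-uniform $L_1$ bound propagates uniformity through every subsequent estimate. The one place demanding a little care is the passage to the autonomous dissipative ODE $L'\le M_3-\alpha_{p_m}'L$: one must verify that converting $-\int_\Omega\sum_j u_j^{p_m}$ into $-\alpha_{p_m}'L(t)$ costs only a $t$-independent constant (which it does, via the $H_{p_m}$-equivalence above) and that $\alpha_{p_m}'>0$ strictly, so that the resulting Gronwall-type bound on $L$ is genuinely uniform in $t$ rather than merely finite on each finite interval.
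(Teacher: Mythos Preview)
Your proposal is correct and follows essentially the same approach as the paper: the paper's own proof is a one-line remark that the argument of Lemma~\ref{lpestimate} can be adapted by integrating over $(\tau,\tau+1)$, and you have simply spelled out that adaptation in detail, correctly identifying that the only $t$-dependence in the earlier proof entered through the $L_1$ bound and that the uniform $L_1$ hypothesis makes $M_3$ (and hence the Gronwall bound on $L(t)$) uniform in $t$.
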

\begin{proof}
The proof of Theorem $\ref{lpestimate}$ can be adopted to obtain this result by integrating over over $(\tau, \tau+1)$ for each $\tau\ge 0$. 
\end{proof}\\

{\bf Proof of Theorem} $\ref{uniform}$: Now, we convert these $L_p$ estimates obtained in Lemma $\ref{lpestimate}$ to sup norm estimates. For that purpose let $\tau\geq 0$ and define a cut off function $\psi\in C_0^{\infty}(\mathbb{R},[0,1])$ such that $\psi=1$ for all $t\geq \tau+1$ and $\psi(t)=0$ for all $t\leq \tau$. In addition, define $\hat u_i(x,t)=\psi(t)u_i(x,t)$. From construction $\hat u_i(x,t)=u_i(x,t)$ for all $(x,t)\in M\times (\tau+1,\tau+2)$ and $(x,t)\in\Omega\times(\tau+1,\tau+2)$ respectively. Also, the $\hat u_j$ satisfy the system 
\begin{align}\label{cutoff}
\frac{\partial \hat u_i}{\partial t}&=d_i \Delta \hat u_i+\psi'(t)u_j(x,t)+\psi(t) F_i(u)  \ & (x,t)\in \Omega\times(\tau,\tau+2)&\  \text{for} \  i=1, ..., m\nonumber\\
d_i\frac{\partial \hat u_i}{\partial\eta}&=\psi(t) G_i(u)  \ & (x,t)\in M\times(\tau,\tau+2)&\  \text{for}\  i=1, ..., m\\
u&=0  \quad & (x,0)\in \overline\Omega\times \tau  \nonumber
\end{align}
From $(V_{\text{Poly}})$, $F$ and $G$ are polynomially bounded above. Also, we have estimates for each of $\|\psi' u_j+\psi F_i(u)\|_{p,\Omega\times(\tau,\tau+2)}$ and $\|\psi G_i(u)\|_{p,M\times(\tau+\tau+2)}$ independent of $\tau\ge0$, for each $1<p<\infty$. Therefore, from Theorem $\ref{holder}$, if $p>n+1$, then $\hat u$ is sup norm bounded on $\Omega\times(\tau,\tau+2)$, independent of $\tau$. The result follows, since $\hat u(x,t)=u(x,t)$ when $\tau+1\le t\le \tau+2$.  
$\Box$

\section{Examples}
\subsection*{Example 1} We start with an example to illustrate the use of Theorem \ref{martinthm}. To this end, consider the system
\begin{align}\label{u1bounded}
u_{1_t}&=d_1 \Delta u_1+u_2^4(1-u_1)^3 \quad & x\in \Omega, t>0  \nonumber\\ 
u_{2_t}&=d_2\Delta u_2+u_2^4(u_1-1)^3 \quad & x\in \Omega, t>0 \nonumber\\
d_1\frac{\partial u_1}{\partial\eta}&=-u_1^2u_2^2 \quad & x\in M, t>0\\\nonumber
d_2 \frac{\partial u_2}{\partial\eta}&=u_1^2u_2^2 \quad & x\in M, t>0\\\nonumber
u_i(x,0)&=w_i(x) & x\in \overline\Omega, i=1,2 \nonumber
\end{align}
where $d_1, d_2>0$ and $w$ is sufficiently smooth and componentwise nonnegative. If we define 
$$F(u)=\begin{pmatrix}u_2^4(1-u_1)^3\\u_2^4(u_1-1)^3\end{pmatrix}\quad\text{and}\quad G(u)=\begin{pmatrix}-u_1^2u_2^2\\u_1^2u_2^2\end{pmatrix},$$
for all $u\in\mathbb{R}_+^2$, then we can easily see that $(V_N)$, $(V_{F})$, $(V_{QP})$ and $(V_{Poly})$ are satisfied. Also,   
$$F_1(u)+F_2(u)=0\quad\text{and}\quad G_1(u)+G_2(u)=0.$$
Furthmore, it is a simple matter to conclude that
$$\|u_1\|_\infty\le \max\left\{\|w_1\|_{\infty,\Omega},1\right\}$$
for all $u\in\mathbb{R}_+^2$. Consquently, we can apply Theorem \ref{martinthm} to conclude that (\ref{u1bounded}) has a unique, componentwise nonnegative, global solution. We remark that in this case, we can obtain a bound for $\|u_2(\cdot,t)\|_{1,\Omega}$ that is independent of $t\ge 0$ (by adding the partial differential equations and integrating over $\Omega$). It is possible to use this information, along with the uniform sup norm bound for $u_1$ to modify the proof of Theorem \ref{martinthm} to obtain a uniform sup norm bound for $u_2$.

\subsection* {Example 2} Here, we give an example related to the well known Brusselator. Consider the system
\begin{align}\label{brus}
u_{1_t}&=d_1 \Delta u_1 \quad & x\in \Omega, t>0  \nonumber\\ 
u_{2_t}&=d_2\Delta u_2 \quad & x\in \Omega, t>0 \nonumber\\
d_1\frac{\partial u_1}{\partial\eta}&=\alpha u_2-u_2^2u_1 \quad & x\in M, t>0\\\nonumber
d_2 \frac{\partial u_2}{\partial\eta}&=\beta-(\alpha+1)u_2+u_2^2u_1 \quad & x\in M, t>0\\\nonumber
u_i(x,0)&=w_i(x) & x\in \overline\Omega \nonumber
\end{align}
where $d_1, d_2, \alpha, \beta>0$ and $w$ is sufficiently smooth and componentwise nonnegative. If we define \[ F(u)=\begin{pmatrix} 0\\0\end{pmatrix} \quad \text{and } \quad G(u)=\begin{pmatrix} \alpha u_2-u_2^2u_1\\\beta-(\alpha+1)u_2+u_2^2u_1\end{pmatrix} \]for all $u\in\mathbb{R}_+^2$, then $(V_N)$, $(V_{F})$, $(V_{QP})$ and $(V_{Poly})$ are satisfied with $a_1\geq 1$ and $L_a=\max\lbrace \beta,\alpha\cdot a_1\rbrace$. Therefore, Theorem \ref{global} implies (\ref{brus}) has a unique, componentwise nonnegative, global solution.

\subsection*{Example 3} We next consider a general reaction mechanism of the form
\[ R_1+ R_2\substack{\longrightarrow\\ \longleftarrow} P_1\] 
where $R_i$ and $P_i$ represent reactant and product species, respectively. If we set $u_i=[R_i]$ for $i=1,2$, and $u_3 = [P_1] $, and let $k_f ,k_r$ be the (nonnegative) forward and reverse reaction rates, respectively, then we can model the process by the application of the law of conservation of mass and the second law of Fick (ﬂow) with the following reaction–diffusion system: 

\begin{align}\label{square1}
u_{i_t}&=d_i \Delta u_i \quad & x\in \Omega, t>0, i=1,2,3 \nonumber\\ \nonumber
d_1\frac{\partial u_1}{\partial\eta}&=-k_fu_1u_2+k_ru_3\quad & x\in M, t>0\\
d_2 \frac{\partial u_2}{\partial\eta}&=-k_fu_1u_2+k_ru_3\quad & x\in M, t>0\\\nonumber
d_3 \frac{\partial u_3}{\partial\eta}&=k_f u_1u_2-k_ru_3 \quad & x\in M, t>0\\\nonumber
u_i(x,0)&=w_i(x) & x\in \overline\Omega, i=1,23, \nonumber
\end{align}
where $d_i>0$ and the initial data $w$ is sufficiently smooth and componentwise nonnegative. If we define
\[ F(u)=\begin{pmatrix} 0\\0\\0\end{pmatrix} \quad \text{,} \quad G(u)=\begin{pmatrix}-k_fu_1u_2+k_rv_3\\ -k_fu_1u_2+k_rv_3\\k_f u_1u_2-k_rv_3   \end{pmatrix} \]
for all $u\in\mathbb{R}_+^3$, then $(V_N)$, $(V_{F})$, $(V_{QP})$ and $(V_{Poly})$ are satisfied. In addition, $(V_{L1})$ is satisfied with $L_1=0$ since \[ \frac{1}{2}H_1(z)+\frac{1}{2}H_2(z)+H_3(z)=0 \quad \text {and}\quad \frac{1}{2}F_1(z)+\frac{1}{2}F_2(z)+F_3(z)=0\] for all $z\in \mathbb{R}^3_{+}$. Therefore, the hypothesis of Theorems \ref{global} and \ref{uniform} are satisfied. As a result (\ref{square1}) has a unique, componentwise nonnegative, uniformly bounded, global solution. 

\subsection*{ Example 4} Finally, we consider a system that satisfies the hypothesis of the Theorem $\ref{global}$, where the boundary reaction vector field does not satisfy a linear intermediate sums condition. Let 

\begin{align}\label{square}
u_{1_t}&=d_1 \Delta u \quad & x\in \Omega, t>0  \nonumber\\ \nonumber
u_{2_t}&=d_2\Delta u \quad & x\in \Omega, t>0\\
d_1\frac{\partial u_1}{\partial\eta}&=\alpha u_1u_2^3-u_1u_2^2 \quad & x\in M, t>0\\\nonumber
d_2 \frac{\partial u_2}{\partial\eta}&=u_1u_2^2-\beta u_1u_2^6 \quad & x\in M, t>0\\\nonumber
u(x,0)&=w(x) & x\in \overline\Omega \nonumber
\end{align}
where $d_1, d_2, \alpha,\beta>0$ and $w$ is sufficiently smooth and componentwise nonnegative. In this setting 
\[ F(u)=\begin{pmatrix} 0\\0\end{pmatrix} \quad \text{,} \quad G(u)=\begin{pmatrix} \alpha u_1u_2^3-u_1u_2^2\\u_1u_2^2-\beta u_1u_2^6\end{pmatrix} \]
for all $u\in\mathbb{R}_+^2$. It is simple matter to see that $(V_N)$, $(V_{F})$, $(V_{QP})$ and $(V_{Poly})$ are satisfied. Also, if $a\ge 1$ then \[ a F_1(u)+F_2(u)=0 \quad \text {and}\quad aG_1(u)+G_2(u)\le (a\alpha-\beta)u_1(u_2^3-u_2^6)\le \frac{a\alpha}{4}u_1\] for all $u\in \mathbb{R}^2_{+}$. Consequenty, $(V_L)$ is satisfied. Therefore, Theorem \ref{global} implies $(\ref{square})$ has a unique, componentwise nonnegative, global solution. 
\section*{Acknowledgement} 
The author would like to thank Prof. Jeff Morgan for his comments, which lead to this work.

\end{document}